\newcommand{\tnorm}{\@ifstar\@tnorms\@tnorm}
\newcommand{\@tnorms}[1]{%
  \left|\mkern-1.5mu\left|\mkern-1.5mu\left|
   #1
  \right|\mkern-1.5mu\right|\mkern-1.5mu\right|
}
\newcommand{\@tnorm}[2][]{%
  \mathopen{#1|\mkern-1.5mu#1|\mkern-1.5mu#1|}
  #2
  \mathclose{#1|\mkern-1.5mu#1|\mkern-1.5mu#1|}
}
\title{Analysis of a space--time hybridizable discontinuous Galerkin
  method for the advection--diffusion problem on time-dependent
  domains \thanks{\funding{Rhebergen gratefully acknowledges support
      from the Natural Sciences and Engineering Research Council of
      Canada through the Discovery Grant program (RGPIN-05606-2015)
      and the Discovery Accelerator Supplement
      (RGPAS-478018-2015). Kirk gratefully acknowledges support from
      the Natural Sciences and Engineering Research Council of Canada
      through the Alexander Graham Bell CGS-M grant. Part of this
      research was conducted while Cesmelioglu was a visiting
      researcher at the Department of Applied Mathematics of the
      University of Waterloo, Canada.}}}
\author{K.L.A. Kirk\thanks{Department of Applied Mathematics,
    University of Waterloo, Waterloo, Ontario N2L~3G1, Canada
    (\email{k4kirk@uwaterloo.ca}, \email{thorvath@uwaterloo.ca}, \email{srheberg@uwaterloo.ca}).}
  \and T.L. Horvath\footnotemark[1]
  \and A. Cesmelioglu\thanks{Department of Mathematics and
    Statistics, Oakland University, Rochester, Michigan 48309, USA (\email{cesmelio@oakland.edu}).}
  \and S. Rhebergen\footnotemark[1]}
\begin{document}
%------------------------------------------------------------------------------
\maketitle
%------------------------------------------------------------------------------
\begin{abstract}
  This paper presents the first analysis of a space--time hybridizable
  discontinuous Galerkin method for the advection--diffusion problem
  on time-dependent domains. The analysis is based on non-standard
  local trace and inverse inequalities that are anisotropic in the
  spatial and time steps. We prove well-posedness of the discrete
  problem and provide \emph{a priori} error estimates in a
  mesh-dependent norm. Convergence theory is validated by a numerical
  example solving the advection--diffusion problem on a time-dependent
  domain for approximations of various polynomial degree.
\end{abstract}
%------------------------------------------------------------------------------
\begin{keywords}
  Space--time, hybridized, discontinuous Galerkin,
  advection--diffusion equations, time-dependent domains. 
\end{keywords}
%------------------------------------------------------------------------------
\begin{AMS}
  65N12, 65M15, 65N30, 35R37, 35Q35
\end{AMS}
%------------------------------------------------------------------------------
\section{Introduction}
\label{sec:introduction}

Many important physical processes are governed by the solution of
time-dependent partial differential equations on moving and deforming
domains.  Of particular importance are advection dominated transport
problems, with applications ranging from multi-phase flows separated
by evolving interfaces to incompressible flow problems arising from
fluid-structure interaction. The presence of dynamic meshes introduces
additional challenges in the design of numerical methods. Most notable
of these challenges is the Geometric Conservation Law
(GCL)~\cite{Lesoinne:1996}, which requires that uniform flow solutions
remain uniform under grid motion. Satisfaction of the GCL is not
trivial, as observed for the popular Arbitrary Lagrangian-Eulerian
(ALE) class of methods in which the time-varying domain is mapped to a
fixed reference domain. By performing all computations on the
reference domain, the ALE method allows the use of explicit or
multi-step time-stepping schemes. However, additional constraints must
be placed on the algorithm to satisfy the GCL that often are not met
in practice for arbitrary mesh movements, see
e.g.,~\cite{Persson:2009}.

In contrast, the space--time discontinuous Galerkin (DG) method
inherently satisfies the GCL, as shown in~\cite{Lesoinne:1996}. Rather
than mapping to a fixed reference domain, the problem is recast into a
space--time domain in which spatial and temporal variables are not
distinguished. This space--time domain is then partitioned into slabs
and each slab is discretized using discontinuous basis functions in
both space and time. The result is a fully conservative scheme that
automatically accounts for grid movement and can be made arbitrarily
higher-order accurate in space and time. The space--time DG method is
suitable for both hyperbolic and parabolic problems, and has been
successfully applied to diffusion and advection--diffusion
problems~\cite{Cangiani:2017,Feistauer:2007,Feistauer:2011,Sudirham:2006},
as well as compressible and incompressible flow
problems~\cite{Rhebergen:2013b, Sudirham:2008,Vegt:2002}.

We pause to mention two possible solution procedures for space--time
methods. The first, which we do not pursue in this article, constructs
a single global system for the solution in the whole space--time
domain~\cite{Neumueller:2013}. The advantage of solving the PDE in the
space--time domain all-at-once is the applicability of
parallel-in-time methods~\cite{McDonald:2018}. The second, outlined
in~\cite{Jamet:1978, Masud:1997, Vegt:2002}, instead computes the
solution slab-by-slab. We apply this second approach.

In the space--time DG method, a time-dependent $d$-dimensional partial
differential equation is discretized by DG in $d+1$ dimensions
resulting in substantially more degrees-of-freedom in each element
compared to traditional time-stepping approaches. To alleviate the
computational burden of the space--time DG method, Rhebergen and
Cockburn~\cite{Rhebergen:2012,Rhebergen:2013} introduced the
space--time Hybridizable DG (HDG) method, extending the HDG method
of~\cite{Cockburn:2009} to space--time.

The HDG method reduces the number of globally coupled
degrees-of-freedom by first introducing approximate traces of the
solution on element facets. Enforcement of continuity of the normal
component of the numerical flux across element facets allows for the
unique determination of these approximate traces. The resulting linear
system may then be reduced through static condensation to a global
system of algebraic equations for only these approximate traces. In
essence, the dimension of the problem is reduced by one, since the
number of globally coupled degrees of freedom is of order
$\mathcal{O}(p^d)$ instead of $\mathcal{O}(p^{d+1})$, where $p$
denotes the polynomial order and $d$ is the spatial dimension of the
problem under consideration.

The first space--time HDG methods for partial differential equations
on time-dependent domains were introduced
in~\cite{Rhebergen:2012,Rhebergen:2013}, however, to the authors'
knowledge, these methods have not been analyzed. The analysis on fixed
domains of the space--time DG method, however, has been considered for
both linear and nonlinear advection--diffusion problems
in~\cite{Feistauer:2007,Feistauer:2011}, and for the space--time HDG
method in~\cite{Neumueller:2013}. Recently, for a time-dependent
diffusion equation,~\cite{Cangiani:2017} analyzed the heat equation on
prismatic space--time elements.

The consideration of moving domains significantly alters the analysis
of the method compared to fixed domains. In particular, moving meshes
lack the tensor product structure necessary to use the space--time
projection introduced in~\cite{Feistauer:2007,Feistauer:2011}, or the
inverse and trace inequalities derived in~\cite{Cangiani:2017} without
modification. The first error analysis of a space--time DG method on
moving and deforming domains for the linear advection--diffusion
equation was performed in~\cite{Sudirham:2005}, and for the Oseen
equations in~\cite{Sudirham:2008}, laying the groundwork for the error
estimates in~\cref{s:erroranalysis}.

In this paper, we analyze a space--time HDG method for the
advection--diffusion equation on a time-dependent
domain. In~\cref{s:advdif} we discuss the scalar advection--diffusion
problem in a space--time setting. Next, in~\cref{s:sthdf}, we discuss
the finite element spaces necessary to obtain the weak formulation of
the advection--diffusion problem, which we subsequently
introduce.~\Cref{s:consstabbound} deals with the consistency and
stability of the space--time HDG method.  Theoretical rates of
convergence of the space--time HDG formulation in a mesh-dependent
norm on moving grids are derived
in~\cref{s:erroranalysis}. Finally,~\cref{s:numex} presents the
results of a numerical example to support the theoretical analysis,
and a concluding discussion is given in~\cref{s:conclusions}.

%-----------------------------------------------------------------------------
\section{The advection--diffusion problem}
\label{s:advdif}

Let $\Omega(t) \subset \mathbb{R}^d$ be a time-dependent polygonal
($d=2$) or polyhedral ($d=3$) domain whose evolution depends
continuously on time $t \in [t_0, t_N]$. Let $x=\del{x_1,\cdots,x_d}$
be the spatial variables and denote the spatial gradient operator by
$\overline{\nabla} = \del{\partial_{x_1},\cdots,\partial_{x_d}}$. We
consider the time-dependent advection--diffusion problem
\begin{equation}
  \label{eq:advdif_tdepdomain}
  \partial_t u + \overline{\nabla} \cdot(\bar{\beta} u) - \nu \overline{\nabla}^2u = f 
  \qquad \mbox{in}\ \Omega(t),\ t_0 < t \le t_N,
\end{equation}
with given advective velocity $\bar{\beta}$, forcing term $f$ and
constant and positive diffusion coefficient $\nu$.

Before introducing the space--time HDG method in \cref{s:sthdf}, we
first present the space--time formulation of the advection--diffusion
problem \cref{eq:advdif_tdepdomain}. Let
$ \mathcal{E} := \{(t,x) : x \in \Omega(t), \; t_0 < t < t_N \}
\subset \mathbb{R}^{d+1}$
be a $(d+1)$-dimensional polyhedral space--time domain.  We denote the
boundary of $\mathcal{E}$ by $\partial \mathcal{E}$, and note that it
is comprised of the hyper-surfaces
$\Omega(t_0) := \{(t,x) \in \partial \mathcal{E} : \; t=t_0 \}$,
$\Omega(t_N) := \{(t,x) \in \partial \mathcal{E} : \; t=t_N \}$, and
$\mathcal{Q}_{\mathcal{E}} := \{(t,x) \in \partial \mathcal{E}: \; t_0
< t < t_N\}$.
The outward space--time normal vector to $\partial \mathcal{E}$ is
denoted by $n := (n_t, \bar{n})$, where $n_t$ and $\bar{n}$ are the
temporal and spatial parts of the space--time normal vector,
respectively.

To recast the advection--diffusion problem in the space--time setting
we introduce the space--time velocity field $\beta := (1,\bar{\beta})$
and the operator $\nabla := (\partial_t, \overline{\nabla})$. The
space--time formulation of~\cref{eq:advdif_tdepdomain} is then given
by
\begin{equation}
  \label{eq:modprob}
  \nabla \cdot (\beta u) - \nu \overline{\nabla}^2 u = f \quad \mbox{in}\ \mathcal{E},   
\end{equation}
where $f \in L^2(\mathcal{E})$ and where
$\beta, \nabla\cdot\beta \in L^{\infty}(\mathcal{E})$.

We partition the boundary of $\Omega(t)$ such that
$\partial\Omega(t) = \Gamma_D(t) \cup \Gamma_N(t)$ and
$\Gamma_D(t) \cap \Gamma_N(t) = \emptyset$ and we partition the
space--time boundary into
$\partial \mathcal{E} = \partial \mathcal{E}_D \cup \partial
\mathcal{E}_N$,
where
$\partial\mathcal{E}_D := \cbr{(t,x): x \in \Gamma_D(t), t_0 < t \le
  t_N}$
and
$\partial\mathcal{E}_N := \cbr{(t,x): x \in
  \Gamma_N(t)\cup\Omega(t_0), t_0 < t \le t_N}$.
Given a suitably smooth function
$g : \partial\mathcal{E}_N \to \mathbb{R}$, we prescribe the initial
and boundary conditions
\begin{subequations}
  \begin{align}  
    \label{eq:modprob_mixed}
    -\zeta u \beta\cdot n + \nu \overline{\nabla} u \cdot \bar{n} &= g 
    \quad \mbox{on } \partial\mathcal{E}_N, 
    \\ 
    \label{eq:modprob_dirichlet}
    u &= 0 \quad \mbox{on}\ \partial\mathcal{E}_D, 
  \end{align}  
  \label{eq:boundaryinitialconditions}
\end{subequations}
where $\zeta$ is an indicator function for the inflow boundary of
$\mathcal{E}$, i.e., the portions of the boundary where
$\beta \cdot n < 0$. Note that~\cref{eq:modprob_mixed} imposes the
initial condition $u(x,0) = g(x)$ on $\Omega(t_0)$.

%-----------------------------------------------------------------------------
\section{The space--time hybridizable discontinuous Galerkin method}
\label{s:sthdf}
In this section we introduce the space--time mesh, the space--time
approximation spaces and the space--time HDG formulation for the
advection--diffusion
problem~\cref{eq:modprob}--\cref{eq:boundaryinitialconditions}.

%-----------------------------------------------------------------------------
\subsection{Description of space--time slabs, faces and elements}
\label{ss:stdesc}
We begin this section with a description of the discretization of the
space--time domain.  First, the time interval $[t_{0},t_N]$ is
partitioned into the time levels $t_{0} < t_{1} < \dots < t_N$, where
the $n$-th time interval is defined as $I_{n} = \del{t_{n}, t_{n+1}}$
with length $\Delta t_{n} = t_{n+1} - t_{n}$. For simplicity we will
assume a fixed time interval length, i.e., $\Delta t_n = \Delta t$ for
$n=0,1,\cdots, N-1$. For ease of notation, we will denote
$\Omega(t_n) = \Omega_n$ in the sequel. The space--time domain is then
divided into space--time slabs
$\mathcal{E}^{n} = \mathcal{E} \cap \del{I_{n} \times
  \mathbb{R}^{d}}$.
Each space--time slab $\mathcal{E}^{n}$ is bounded by $\Omega_n$,
$\Omega_{n+1}$, and
$\mathcal{Q}^{n}_{\mathcal{E}} = \partial \mathcal{E}^{n} \setminus
\del{\Omega_n \cup \Omega_{n+1} }$.

We further divide each space--time slab into space--time elements,
$\mathcal{E}^n = \bigcup_{j} \mathcal{K}_{j}^{n}$. To construct the
space--time element $\mathcal{K}_{j}^n$, we divide the domain
$\Omega_n$ into non-overlapping spatial elements $K_{j}^{n}$, so that
$\Omega_n = \bigcup_{j} K_{j}^{n}$. Then, at $t_{n+1}$ the spatial
elements $K_{j}^{n+1}$ are obtained by mapping the nodes of the
elements $K_{j}^{n}$ into their new position via the transformation
describing the deformation of the domain. Each space--time element
$\mathcal{K}_{j}^{n}$ is obtained by connecting the elements
$K_{j}^{n}$ and $K_{j}^{n+1}$ via linear interpolation in time. 

The boundary of the space--time element $\mathcal{K}_j^n$ consists of
$K_{j}^{n}$, $K_{j}^{n+1}$, and
$\mathcal{Q}_{j}^{n} = \partial \mathcal{K}_{j}^{n} \setminus
(K_{j}^{n} \cup K_{j}^{n+1})$.
On $\partial \mathcal{K}_j^n$, the outward unit space--time normal
vector is denoted by
$n^{\mathcal{K}_j^n} = (n_{t}^{\mathcal{K}_j^n},
\bar{n}^{\mathcal{K}_j^n})$,
where $n_t^{\mathcal{K}_j^n}$ and $\bar{n}^{\mathcal{K}_j^n}$ are,
respectively, the temporal and spatial parts of the space--time normal
vector. On $K^{n}_j$, $n^{\mathcal{K}_j^n} = (-1,0)$, while on
$K^{n+1}_j$, $n^{\mathcal{K}_j^n}=(1,0)$. In the remainder of the
article, we will drop the subscripts and superscripts when referring
to space--time elements, their boundaries and outward normal vectors
wherever no confusion will occur.

We complete the description of the space--time domain with the
tessellation $\mathcal{T}_h^n$ consisting of all space--time elements
in $\mathcal{E}^n$, and
$\mathcal{T}_{h} = \bigcup_{n} \mathcal{T}_{h}^{n}$ consisting of all
space--time elements in $\mathcal{E}$. An illustration of a
space--time domain is shown in the case of one spatial dimension in
\cref{fig:ST_dom}.

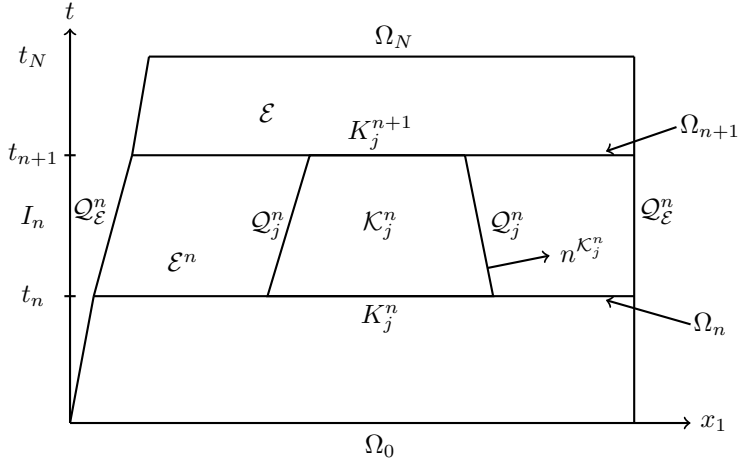
\begin{figure}
  \label{fig:ST_dom}
  \centering
  \begin{tikzpicture}[scale=0.75]
    % 
    % axis 
    \draw[->, thick] (4,-2.5)--(4,4.5) node[above]{$t$};
    \draw[->, thick] (4,-2.5)--(15,-2.5) node[right]{$x_1$};
    \draw[thick] (3.9,-0.25)--(4.1,-0.25);
    \draw(3.35,-0.25) node{$t_n$};
    \draw[thick] (3.9,2.25)--(4.1,2.25);
    \draw(3.35,2.25) node{$t_{n+1}$};
    \draw(3.35,1.15) node{$I_n$};
    % 
    % space--time element
    \draw[thick](11,2.25)--(8.25,2.25)--(7.5,-0.25)--(11.5,-0.25)--(11,2.25);
    \draw(9.5,1) node{$\mathcal{K}_j^n$};
    % 
    % normal
    \draw[->,thick](11.4,0.25)--(12.5,0.47); 
    \draw(13.1, 0.58)node{$n^{\mathcal{K}_j^n}$};
    \draw(6,0.35) node{$\mathcal{E}^n$};
    \draw(7.5,1) node{$\mathcal{Q}_j^n$};
    \draw(11.75,1) node{$\mathcal{Q}_j^n$};
    \draw(9.5,2.65) node{$K^{n+1}_j$};
    \draw(9.5,-0.65) node{$K^{n}_j$};
    \draw[thick] (14,-2.5)--(14,4);
    \draw[thick] (5.4,4)--(14,4);
    \draw[thick] (5.4,4)--(5.1,2.25);
    \draw[thick] (5.1,2.25) -- (4.42,-0.25);
    \draw[thick]  (4.42,-0.25) -- (4,-2.5);
    \draw(3.35,4) node{$t_N$};
    \draw(9.75,4.35) node{$\Omega_N$};
    \draw(4.35,1.25) node{$\mathcal{Q}_{\mathcal{E}}^{n}$};
    \draw(14.4,1.25) node{$\mathcal{Q}_{\mathcal{E}}^{n}$};
    \draw(7.5,3) node{$\mathcal{E}$};
    \draw[thick](5.1,2.25)--(14,2.25);
    \draw[thick](4.42,-0.25)--(14,-0.25);
    \draw[->,thick](15,-0.75)--(13.5,-0.32);
    \draw(15.35,-0.75) node{$\Omega_n$};
    \draw[->,thick](14.75,2.75)--(13.5,2.32);
    \draw(15.35,2.75) node{$\Omega_{n+1}$};
    \draw(9.5,-2.9) node{$\Omega_{0}$};
  \end{tikzpicture}
  \caption{An example of a space--time slab in a polyhedral
    $(1+1)$-dimensional space--time domain.}
\end{figure}

Finally, an interior space--time facet $\mathcal{S}$ is shared by two
adjacent elements $\mathcal{K}^{L}$ and $\mathcal{K}^{R}$,
$\mathcal{S}= \partial \mathcal{K}^{L} \cap \partial \mathcal{K}^{R}$,
while a boundary facet is a face of $\partial \mathcal{K}$ that lies
on $\partial \mathcal{E}$. The set of all facets will be denoted by
$\mathcal{F}$, and the union of all facets by $\Gamma$.

%-----------------------------------------------------------------------------
\subsection{Approximation spaces}
\label{ss:approxspaces}
We define the Sobolev space
$H^s(\Omega) = \{ v \in L^2(\Omega): D^{\alpha}v \in L^2(\Omega)
\text{ for } |\alpha| \leq s \}$,
where $D^{\alpha}v$ denotes the weak derivative of $v$, $\alpha$ is
the multi-index symbol, $s$ a non-negative integer, and
$\Omega \subset \mathbb{R}^n$ is an open domain with $n=d$ or $n=d+1$
(see e.g.~\cite{Brenner:book}). The space $H^s(\Omega)$ is equipped
with the following norm and semi-norm:
\begin{equation}
  \norm{v}_{H^s(\Omega)}^2 = 
  \sum_{| \alpha | \leq s} \norm{D^{\alpha} v}_{L^2(\Omega)}^2 
  \quad\text{and}\quad
  | v |_{H^s(\Omega)}^2 = \sum_{| \alpha | = s} \norm{D^{\alpha} v}_{L^2(\Omega)}^2,
\end{equation}
where $\norm{\cdot}_{L^2(\Omega)}$ is the standard $L^2$-norm on
$\Omega$. In the sequel, we will simply write
$\norm{v}_{\Omega} = \norm{v}_{L^2(\Omega)}$.

Next, we introduce anisotropic Sobolev spaces on an open domain
$\Omega \subset \mathbb{R}^{d+1}$~\cite{Georgoulis:2003}. For
simplicity, we follow~\cite{Sudirham:2006, Sudirham:2008} by
restricting the anisotropy to the case where the Sobolev index can
differ only between spatial and temporal variables. All spatial
variables will have the same index. Let $(s_t, s_s)$ be a pair of
non-negative integers, with $s_t$, $s_s$ corresponding to the spatial
and temporal Sobolev indices. For $\alpha_{t}, \alpha_{s_i} \geq 0$,
$i=1,\dots,d$, we define the anisotropic Sobolev space of order
$(s_t, s_s)$ on $\Omega \subset \mathbb{R}^{d+1}$ by
\begin{equation}
  H^{(s_t,s_s)}(\Omega) = \{ v \in L^2(\Omega): D^{\alpha_t}D^{\alpha_s}v \in L^2(\Omega) 
  \text{ for } \alpha_{t} \leq s_t,  |\alpha_s| \leq s_s \},
\end{equation}
where $\alpha_s = \del{\alpha_{s_1},\cdots,\alpha_{s_d}}$. The
anisotropic Sobolev norm and semi-norm are given by, respectively,
\begin{equation*}
  \norm{v}_{H^{(s_t,s_s)} (\Omega)}^2 = \sum_{\substack{\alpha_t \leq s_t \\ | \alpha_s | \leq s_s}} 
  \norm{D^{\alpha_t}D^{\alpha_s} v}_{\Omega}^2 
  \quad\text{and}\quad
  |v|_{H^{(s_t,s_s)} (\Omega)}^2 = \sum_{\substack{\alpha_t = s_t \\ | \alpha_s | = s_s}} 
  \norm{D^{\alpha_t}D^{\alpha_s} v}_{\Omega}^2.  
\end{equation*}

We assume that each space--time element $\mathcal{K}$ is the image of
a fixed master element $\widehat{\mathcal{K}} = \del{-1,1}^{d+1}$
under two mappings. First, we construct an intermediate tensor-product
element $\widetilde{\mathcal{K}}$ from an affine mapping
$F_{\mathcal{K}} : \widehat{\mathcal{K}} \rightarrow
\widetilde{\mathcal{K}}$
of the form $F_{\mathcal{K}}(\hat{x}) = A_{\mathcal{K}}\hat{x} + b$,
where
$A_{\mathcal{K}} = \text{diag}\del{\frac{\Delta t}{2}, \frac{h_1}{2},
  \dots, \frac{h_d}{2}}$.
Here $h_i$ is the edge length in the $i$-th coordinate direction,
$\Delta t$ the time-step, and $b \in \mathbb{R}^{d+1}$ is a constant
vector.

Next, the space--time element $\mathcal{K}$ is obtained from
$\widetilde{\mathcal{K}}$ via the suitably regular diffeomorphism
$\phi_{\mathcal{K}}:\widetilde{\mathcal{K}}\to\mathcal{K}$. The
mapping $\phi_{\mathcal{K}}$ determines the shape of the space--time
element after the size of the element has been specified by
$F_{\mathcal{K}}$. Following \cite{Georgoulis:2003}, we will assume
that the Jacobian of the diffeomorphism $\phi_{\mathcal{K}}$
satisfies:
\begin{equation*}
  C_1^{-1} \leq |\det J_{\phi_{\mathcal{K}}}| \leq C_1, 
  \qquad 
  \norm{\det J_{\phi_{\mathcal{K}}\setminus mn}}_{L^{\infty}(\widetilde{\mathcal{K}})} \leq C_2, 
  \quad m,n = 0,\dots,d,  \quad \forall \mathcal{K} \in \mathcal{T}_h,
\end{equation*}
where $C_1$ and $C_2$ are constants independent of the edge lengths
$h_i$ and the time-step $\Delta t$, and where
$\det J_{\phi_{\mathcal{K}}\setminus mn}$ denotes the $(m,n)$ minor of
$J_{\phi_{\mathcal{K}}}$.
\begin{figure}
  \label{fig:ST_elem}
  \centering
  \begin{tikzpicture}[scale=0.75]
    \draw[->, thick] (-1,4,2)--(-1,5.5,2) node[above]{$x_{2}$};
    \draw[->, thick] (-1,4,2)--(-1,4,0) node[right]{$t$};
    \draw[->, thick] (-1,4,2)--(0.5,4,2) node[right]{$x_{1}$};
    \draw[thick](2,2,0)--(0,2,0)--(0,2,2)--(2,2,2)--(2,2,0)--(2,0,0)--(2,0,2)--(0,0,2)--(0,2,2);
    \draw[thick](2,2,2)--(2,0,2);
    \draw[dotted](2,0,0)--(0,0,0)--(0,2,0);
    \draw[dotted](0,0,0)--(0,0,2);
    \draw(1.125,1.25,2) node{$\widehat{\mathcal{K}}$};
    \draw(-1,-1.7,-1) node{$(-1,-1,-1)$};
    \draw(3.1,3.6,3) node{$(1,1,1)$};
    \draw[->,thick](3.7,2,3) to [out=45] node[above](4.5,2,3){$F_{\mathcal{K}}$} (5.0,2,3);
    \draw[thick](6,0,3)--(8,0,3)--(8,0,0)--(8,4,0)--(6,4,0)--(6,4,3)--(8,4,3)--(8,4,0);
    \draw[thick](8,4,3)--(8,0,3);
    \draw[thick](6,4,3)--(6,0,3);
    \draw[dotted](6,4,0)--(6,0,0)--(8,0,0);
    \draw[dotted](6,0,0)--(6,0,3);
    \draw(7.25,2.25,3) node{$\widetilde{\mathcal{K}}$};
    \draw[<->,thick](6,-0.3,3)--(8,-0.3,3);
    \draw(7,-0.6,3) node{$h_1$};
    \draw[<->,thick](5.7,0,3)--(5.7,4,3);
    \draw(5.3,0.8,3) node{$h_2$};
    \draw[<->,thick](5.7,4,3)--(5.7,4,0);
    \draw(6,5,3) node{$\Delta t$};
    \draw[->,thick](9.5,2.5,3) to [out=45] node[above](9.5,2.5,3){$\phi_{\mathcal{K}}$} (11.5,2.5,3);
    \draw[thick](12,0,3)--(14,0,3)--(14,0,0)--(14.5,3.5,0)--(12,4,0)--(12,4,3)--(14.5,3.5,3)--(14.5,3.5,0);
    \draw[thick](14.5,3.5,3)--(14,0,3);
    \draw[thick](12,4,3)--(12,0,3);
    \draw[dotted](12,4,0)--(12,0,0)--(14,0,0);
    \draw[dotted](12,0,0)--(12,0,3);
    \draw(13.25,2.25,3) node{$\mathcal{K}$};
  \end{tikzpicture}
  \caption{Construction of the space--time element $\mathcal{K}$
    through an affine mapping
    $F_{\mathcal{K}}:\widehat{\mathcal{K}}\to\widetilde{\mathcal{K}}$
    and a diffeomorphism
    $\phi_{\mathcal{K}}:\widetilde{\mathcal{K}}\to\mathcal{K}$~\cite{Sudirham:2006}.}
\end{figure}
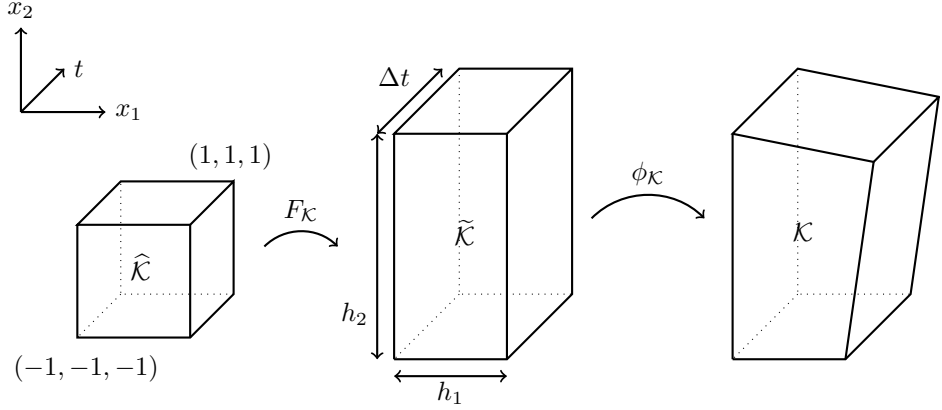

Following~\cite{Sudirham:2006}, we define the Sobolev space
$H^{(s_t,s_s)}(\widetilde{\mathcal{K}})$ as
\begin{equation}
  H^{(s_t,s_s)}(\widetilde{\mathcal{K}}) = \{ v \in L^2(\widetilde{K}): D^{\alpha_t}D^{\alpha_s}v 
  \in L^2(\widetilde{K}) \text{ for } \alpha_{t} \leq s_t,  |\alpha_s| \leq s_s \}.
\end{equation}
Furthermore, the Sobolev space $H^{(s_t,s_s)}(\mathcal{K})$ is defined as
\begin{equation}
  H^{(s_t,s_s)}(\mathcal{K}) = \{ v \in L^2(\mathcal{K}): v \circ \phi_{\mathcal{K}} 
  \in H^{(s_t,s_s)}(\widetilde{\mathcal{K}})\},
\end{equation}
see~\cite[Definition 2.9]{Georgoulis:2003}.

For the analysis in~\cref{s:consstabbound} we require the concept of a
broken anisotropic Sobolev space. We assign to $\mathcal{T}_h$ the
broken Sobolev space
\begin{equation}
  H^{(s_t,s_s)} (\mathcal{T}_h) = \{ v \in L^2(\mathcal{E}): v |_{\mathcal{K}} 
  \in H^{(s_s,s_t)}(\mathcal{K}),
  \forall \mathcal{K} \in \mathcal{T}_h \},
\end{equation}
which we equip with the broken anisotropic Sobolev norm and semi-norm,
respectively,
\begin{equation}
  \norm{v}_{H^{(s_t,s_s)}( \mathcal{T}_h)}^2 = \sum_{\mathcal{K} \in \mathcal{T}_h} 
  \norm{v}_{H^{(s_t,s_s)}(\mathcal{K})}^2
  \quad\text{and}\quad
  |v|_{H^{(s_t,s_s)}(\mathcal{T}_h)}^2 = \sum_{\mathcal{K} \in \mathcal{T}_h} 
  |v|_{H^{(s_t,s_s)}(\mathcal{K})}^2.
\end{equation}
For $v \in H^{(1,1)}(\mathcal{T}_h)$, we define the broken
(space--time) gradient $\nabla_h v$ by
$(\nabla_h v)|_{\mathcal{K}} = \nabla (v|_{\mathcal{K}})$,
$\forall \mathcal{K} \in \mathcal{T}_h$.

Additionally, we will make use of the following (spatial) shape
regularity assumption. Suppose $\mathcal{K} \in \mathcal{T}_h$ is
constructed from the fixed reference element $\widehat{\mathcal{K}}$
via the mappings
$F_{\mathcal{K}}: \widehat{K} \to \widetilde{\mathcal{K}}$ and
$\phi_{\mathcal{K}}: \widetilde{\mathcal{K}} \to \mathcal{K}$. Let
$h_K$ and $\rho_K$ denote the radii of the $d$-dimensional
circumsphere and inscribed sphere of the brick
$h_1 \times \cdots \times h_d$, respectively. We assume the existence
of a constant $c_r > 0$ such that
\begin{equation} 
  \label{eq:shape_reg}
  \frac{h_K}{\rho_K} \le c_r, \quad \forall \mathcal{K} \in \mathcal{T}_h.
\end{equation}

For the HDG method, we require the finite element spaces
\begin{align} 
  V_{h}^{(p_{t},p_{s})} &= \cbr{ v_h \in L^{2}\del{ \mathcal{E} } :
    v_h |_{\mathcal{K}} \circ \phi_{\mathcal{K}} \circ F_{\mathcal{K}}
    \in Q_{(p_{t},p_{s})}(\widehat{\mathcal{K}}), \forall \mathcal{K} \in \mathcal{T}_{h} }, 
  \\
  M_{h}^{(p_{t},p_{s})} &= \{ \mu_h \in L^{2}\del{ \Gamma } : 
  \mu_h |_{\mathcal{S}}\circ \phi_{\mathcal{K}} \circ F_{\mathcal{K}}
  \in Q_{(p_{t},p_{s})}(\widehat{\mathcal{S}}) , \forall  \mathcal{S} \in \mathcal{F}, 
  \\ 
  \notag & \hspace{20em} \mu_h = 0 \text{ on } \partial \mathcal{E}_D \},
\end{align}
where $Q_{(p_t,p_s)}(D)$ denotes the set of all tensor-product
polynomials of degree $p_t$ in the temporal direction and $p_s$ in
each spatial direction on a domain $D$. Furthermore, we define
$V_h^{\star} = V_{h}^{(p_{t},p_{s})} \times M_{h}^{(p_{t},p_{s})}$.

%-----------------------------------------------------------------------------
\subsection{Weak formulation}
\label{ss:weakformulation}
It will be convenient to introduce the bilinear forms
\begin{subequations}
  \begin{align}
    \label{eq:bilin_aa}
    a_h^a\del{ (u, \lambda), (v, \mu) } &=
    -\sum_{\mathcal{K}\in\mathcal{T}_h}\int_{\mathcal{K}}\beta u \cdot \nabla_h v\dif x 
    + \int_{\partial\mathcal{E}_N}\frac{1}{2}\del{\beta \cdot n  
    + |\beta \cdot n |}\lambda\mu \dif s
    \\ \notag &
    + \sum_{\mathcal{K}\in\mathcal{T}_h}\int_{\partial\mathcal{K}} \frac{1}{2}\del{
    \beta \cdot n  (u+\lambda)
    + |\beta \cdot n |(u - \lambda)} \del{v - \mu} \dif s,
    \\
    \label{eq:bilin_ad}
    a_h^d\del{ (u, \lambda), (v, \mu) } &=
    \sum_{\mathcal{K}\in\mathcal{T}_h}\int_{\mathcal{K}}\nu\overline{\nabla}_h u \cdot
    \overline{\nabla}_h v \dif x 
    + \sum_{\mathcal{K}\in\mathcal{T}_h}\int_{\mathcal{Q}}\frac{\nu \alpha}{h_K}
    (u - \lambda)\del{v - \mu} \dif s
    \\ \notag &
    - \sum_{\mathcal{K}\in\mathcal{T}_h}\int_{\mathcal{Q}} \sbr{ \nu(u - \lambda)\overline{\nabla}_h 
    v \cdot \bar{n} + \nu\overline{\nabla}_h u\cdot \bar{n} \del{v - \mu}}\dif s,
  \end{align}
\end{subequations}
where $\alpha>0$ is a penalty parameter. The space--time HDG method
for~\cref{eq:modprob}--\cref{eq:boundaryinitialconditions} is then
given by: find $(u_h, \lambda_h) \in V_h^{\star}$ such that
\begin{equation}
  \label{eq:compactwf}
  a_h\del{ (u_h, \lambda_h), (v_h, \mu_h) } = 
  \sum_{\mathcal{K}\in\mathcal{T}_h}\int_{\mathcal{K}}fv_h \dif x + \int_{\partial\mathcal{E}_N} g \mu_h \dif s
  \qquad \forall (v_h, \mu_h) \in V_h^{\star},
\end{equation}
where $  a_h\del{(u, \lambda), (v, \mu)} = a_h^a\del{ (u, \lambda), (v, \mu) }
  + a_h^d\del{ (u, \lambda), (v, \mu) }$.

%-----------------------------------------------------------------------------
\section{Stability and boundedness}
\label{s:consstabbound}
In this section we prove stability and boundedness of the space--time
HDG method~\cref{eq:compactwf}. Our analysis will make repeated use of
local trace and inverse inequalities valid on the finite element space
$V_h^{(p_t,p_s)}$. Using ideas from \cite{Georgoulis:2003}, the
dependence on the spatial mesh size $h_K$ and time-step $\Delta t$ is
made explicit in these inequalities. Motivated by the fact that these
two parameters differ in general, this will allow us to derive error
bounds in \cref{s:erroranalysis} that are anisotropic in $h_K$ and
$\Delta t$ as in \cite{Sudirham:2006,Sudirham:2008}. The local trace
and inverse inequalities are summarized in the following lemma.
\begin{lemma}
  \label{lem:invtrace}
  Assume that $\mathcal{K}$ is a space--time element in
  $\mathbb{R}^{d+1}$ constructed via the mappings
  $\phi_{\mathcal{K}}:\widetilde{\mathcal{K}}\to\mathcal{K}$ and
  $F_{\mathcal{K}}:\widehat{K}\to\widetilde{\mathcal{K}}$ as defined
  in~\cref{ss:approxspaces}. Assume further that the spatial shape
  regularity condition \cref{eq:shape_reg} holds. Then, for all
  $v_h \in V_{h}^{(p_{t},p_{s})}$, the following local inverse and
  trace inequalities hold:
  \begin{subequations}
    \begin{align}
      \label{eq:t_inv}
      \norm{\partial_t v_h}_{\mathcal{K}} & \le 
                                            c_{I,t} \del{\Delta t^{-1} + h_K^{-1}} \norm{v_h}_{\mathcal{K}}, 
      \\
      \label{eq:s_inv}
      \norm{\overline{\nabla}_h v_h}_{\mathcal{K}} & \le 
                                                     c_{I,s} h_K^{-1} \norm{v_h}_{\mathcal{K}}, 
      \\
      \label{eq:s_trace}
      \norm{v_h}_{\mathcal{Q}} & \le c_{T,\mathcal{Q}} 		
                                 h_K^{-\frac{1}{2}}\norm{v_h}_{\mathcal{K}},  
      \\
      \label{eq:st_trace} 
      \norm{v_h}_{\mathcal{\partial \mathcal{K}}} & \le  
                            c_{T,\partial \mathcal{K}} \del{\Delta t^{-\frac{1}{2}} + h_K^{-\frac{1}{2}}}  \norm{v_h}_{\mathcal{K}}, 
    \end{align} 
    \label{eq:invtrace}
  \end{subequations}
  where $c_{I,s}$, $c_{I,t}$, $c_{T,\mathcal{Q}}$, and
  $c_{T,\partial{\mathcal{K}}}$ are constants depending on the
  polynomial degrees $p_t$ and $p_s$, the spatial shape-regularity
  constant $c_r$, and the Jacobian of the mapping
  $\phi_{\mathcal{K}}$, but independent of the spatial mesh size $h_K$
  and the time step $\Delta t$.
\end{lemma}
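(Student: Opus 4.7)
The plan is to prove all four inequalities by a standard three-stage pull-back: establish them on the reference cube $\widehat{\mathcal{K}}$ using classical polynomial estimates, push forward to the intermediate tensor-product box $\widetilde{\mathcal{K}}$ via $F_{\mathcal{K}}$, and finally transfer to $\mathcal{K}$ via $\phi_{\mathcal{K}}$. The essential point is to keep the temporal and spatial directions separate throughout, so that the parameters $\Delta t$ and $h_K$ appear with the asymmetry prescribed by \eqref{eq:t_inv}--\eqref{eq:st_trace}.

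On $\widehat{\mathcal{K}}=(-1,1)^{d+1}$, finite-dimensionality of $Q_{(p_t,p_s)}(\widehat{\mathcal{K}})$ furnishes directional polynomial inverse estimates $\|\partial_{\hat t}\hat v\|_{\widehat{\mathcal{K}}}, \|\partial_{\hat x_i}\hat v\|_{\widehat{\mathcal{K}}} \le C(p_t,p_s)\|\hat v\|_{\widehat{\mathcal{K}}}$ and a trace estimate $\|\hat v\|_{\partial\widehat{\mathcal{K}}}\le C(p_t,p_s)\|\hat v\|_{\widehat{\mathcal{K}}}$. Since $A_{\mathcal{K}}$ is diagonal, the change of variables $F_{\mathcal{K}}$ satisfies $\partial_{\hat t}=(\Delta t/2)\,\partial_t$, $\partial_{\hat x_i}=(h_i/2)\,\partial_{x_i}$, and $|\det A_{\mathcal{K}}|\simeq \Delta t\, h_K^{d}$ once the shape regularity \eqref{eq:shape_reg} is used to replace each $h_i$ by $h_K$. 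Inserting these relations into the reference estimates yields the intermediate inverse bounds $\|\partial_t\tilde v\|_{\widetilde{\mathcal{K}}}\le C\Delta t^{-1}\|\tilde v\|_{\widetilde{\mathcal{K}}}$ and $\|\overline{\nabla}\tilde v\|_{\widetilde{\mathcal{K}}}\le C h_K^{-1}\|\tilde v\|_{\widetilde{\mathcal{K}}}$. For the traces, a constant-time face of $\widetilde{\mathcal{K}}$ has measure $\simeq h_K^{d}$ while a lateral face has measure $\simeq \Delta t\, h_K^{d-1}$; dividing each surface measure by the volume $\Delta t\, h_K^{d}$ and taking square roots produces the factor $\Delta t^{-1/2}$ on the temporal faces and $h_K^{-1/2}$ on the lateral face, already matching the $\widetilde{\mathcal{K}}$-versions of \eqref{eq:s_trace}--\eqref{eq:st_trace}.

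The final transfer through $\phi_{\mathcal{K}}$ is the main obstacle, and is where the asymmetry between \eqref{eq:t_inv} and \eqref{eq:s_inv} originates. Because each $\mathcal{K}$ is constructed by linear interpolation in time between $K^n$ and $K^{n+1}$, the diffeomorphism preserves the time coordinate, so $J_{\phi_{\mathcal{K}}}$ has first row $(1,0,\ldots,0)$ and hence a block-triangular structure. Cramer's rule, together with the assumed bounds $C_1^{-1}\le|\det J_{\phi_{\mathcal{K}}}|\le C_1$ and $\|\det J_{\phi_{\mathcal{K}}\setminus mn}\|_{L^{\infty}(\widetilde{\mathcal{K}})}\le C_2$, then guarantees that every entry of $J_{\phi_{\mathcal{K}}}^{-1}$ is bounded uniformly in $h_K$ and $\Delta t$. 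Writing $v=\tilde v\circ\phi_{\mathcal{K}}^{-1}$ and using the chain rule, the block-triangular structure forces $\overline{\nabla}v$ to depend only on $\overline{\nabla}_{\tilde x}\tilde v$, so combining with the intermediate spatial inverse estimate reproduces the pure $h_K^{-1}$ scaling of \eqref{eq:s_inv}. By contrast, $\partial_t v$ receives contributions from both $\partial_{\tilde t}\tilde v$ and $\overline{\nabla}_{\tilde x}\tilde v$ through $J_{\phi_{\mathcal{K}}}^{-T}$, and bounding each with the corresponding intermediate estimate delivers the combined $\Delta t^{-1}+h_K^{-1}$ factor of \eqref{eq:t_inv}. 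The norm equivalence $\|v\|_{\mathcal{K}}\simeq \|\tilde v\|_{\widetilde{\mathcal{K}}}$ follows from the determinant bound, and an analogous surface-Jacobian identity, controlled by the minors of $J_{\phi_{\mathcal{K}}}$, transfers the intermediate trace estimate on lateral faces into \eqref{eq:s_trace}. Summing the temporal ($\Delta t^{-1/2}$) and lateral ($h_K^{-1/2}$) contributions over $\partial\mathcal{K}=K^n\cup K^{n+1}\cup\mathcal{Q}$ then produces \eqref{eq:st_trace}.
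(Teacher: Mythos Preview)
Your proposal is correct and, in fact, spells out precisely the mechanism that the paper's own proof leaves implicit: the paper's proof consists of a single sentence citing \cite[Corollaries~3.54 and~3.59]{Georgoulis:2003} and stating that \eqref{eq:t_inv}--\eqref{eq:st_trace} are space--time variants thereof. Your three-stage pull-back (reference cube $\to$ tensor box via $F_{\mathcal K}$ $\to$ physical element via $\phi_{\mathcal K}$), together with the observation that the block-triangular structure of $J_{\phi_{\mathcal K}}$ is what produces the asymmetric scalings in \eqref{eq:t_inv} versus \eqref{eq:s_inv}, is exactly the content of those cited results adapted to the present setting; the paper simply does not reproduce the argument.
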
 
\begin{proof}
  Inequalities~\cref{eq:t_inv}--\cref{eq:st_trace} are space--time
  variants of those found in~\cite[Corollary 3.54, Corollary
  3.59]{Georgoulis:2003}.
\end{proof}

Additionally, we will require the following discrete Poincar\'{e}
inequality valid for
$(v_h,\mu_h) \in V_{h}^{\star}$~\cite{Sudirham:2006},
\begin{equation} 
  \label{eq:discpoinc}
  \norm{v_h}^2_{\mathcal{E}} \le c_p^2
  \del{ \sum_{\mathcal{K}\in\mathcal{T}_h}\norm{\overline{\nabla}_h v_h}_{\mathcal{K}}^2 
    + \sum_{\mathcal{K}\in\mathcal{T}_h} \frac{1}{h_K}\norm{v_h - \mu_h}^2_{\mathcal{Q}} },
\end{equation}
where $c_p >0$ is a constant independent of the spatial mesh size
$h_K$ and time-step $\Delta t$.

Consider the following extended function spaces on $\mathcal{E}$ and
$\Gamma$:
\begin{equation}
  V(h) = V^{(p_t,p_s)}_h + H^2(\mathcal{E}), \qquad
  M(h) = M^{(p_t,p_s)}_h + H^{3/2}(\Gamma),
\end{equation}
where $H^{3/2}(\Gamma)$ is the trace space of $H^2(\mathcal{E})$. For
notational purposes we also introduce
$V^{\star}(h) = V(h) \times M(h)$. We define three norms on
$V^{\star}(h)$. First, the ``stability'' norm is defined as
\begin{multline}
  \label{eq:hdgnorm}
  \tnorm{ (v,\mu) }_{v}^2 =
  \norm{v}_{\mathcal{E}}^2
  + \norm{ \beta_n^{1/2} \mu }^2_{\partial\mathcal{E}_N}
  + \sum_{\mathcal{K}\in\mathcal{T}_h}
  \norm{ \beta_n^{1/2}(v - \mu) }^2_{\partial \mathcal{K}} 
  \\
  + \sum_{\mathcal{K}\in\mathcal{T}_h} \nu 
  \norm{\overline{\nabla}_h v}^2_{\mathcal{K}}
  + \sum_{\mathcal{K}\in\mathcal{T}_h} \frac{\nu}{h_K}
  \norm{v - \mu}^2_{\mathcal{Q}},
\end{multline}
where for ease of notation we have defined
$\beta_n = |\beta \cdot n|$. Additionally, we introduce a stronger
norm obtained by endowing the ``stability'' norm with an additional
term controlling the $L^2$-norm of time derivatives:
\begin{equation}
  \label{eq:infsupnorm}
  \tnorm{ (v,\mu) }_s^2 = \tnorm{ (v,\mu) }_v^2 
  + \sum_{\mathcal{K}\in\mathcal{T}_h}\frac{\Delta t h_K^2}{\Delta t + h_K} 
  \norm{\partial_t v}^2_{\mathcal{K}}.
\end{equation}
To prove boundedness of the bilinear form in \cref{ss:boundedness} we
introduce the following norm:
\begin{align} 
  \label{eq:bndnorm}
  \tnorm{ (v,\mu) }_{s,\star}^2 =& \tnorm{ (v,\mu) }_{s}^2 + \sum_{\mathcal{K}\in\mathcal{T}_h} 
                                  \norm{\beta_n^{1/2} v}_{\partial \mathcal{K}^+}^2
                                  + \sum_{\mathcal{K}\in\mathcal{T}_h} 
                                  \norm{\beta_n^{1/2}\mu}_{\partial \mathcal{K}^-}^2 
  \\ \notag
                                &+ \sum_{\mathcal{K}\in\mathcal{T}_h}  h_K\nu 
                                  \norm{\overline{\nabla}_h v \cdot \bar{n}}_{\mathcal{Q}}^2
                                  +  \sum_{\mathcal{K}\in\mathcal{T}_h} 
                                  \frac{\Delta t + h_K}{\Delta t h_K^2}
                                  \norm{v}_{\mathcal{K}}^2,
\end{align}
where $\partial\mathcal{K}^+$ denotes the outflow part of the boundary
(where $\beta\cdot n > 0$) and where $\partial\mathcal{K}^-$ denotes
the inflow part of the boundary (where $\beta\cdot n \le 0$). The
additional terms are required since the inequalities
in~\cref{lem:invtrace} are valid only on the discrete space
$V_h^{(p_t,p_s)}$.

Let $u \in H^{2}(\mathcal{E})$ solve the advection--diffusion
problem~\cref{eq:modprob}. Defining the trace operator
$\gamma : H^2(\mathcal{E}) \to H^{3/2}(\Gamma)$, restricting functions
in $H^2(\mathcal{E})$ to $\Gamma$, and letting
$\boldsymbol{u} = (u, \gamma(u))$, we have
\begin{equation}
  \label{eq:consistency}
  a_h(\boldsymbol{u}, (v_h,\mu_h) ) = 
  \sum_{\mathcal{K}\in\mathcal{T}_h}\int_{\mathcal{K}}fv_h \dif x + \int_{\partial\mathcal{E}_N} g \mu_h \dif s
  \qquad \forall (v_h,\mu_h) \in V_h^{\star}.
\end{equation}
This consistency result follows by noting that $u = \gamma(u)$ on
element boundaries, integration by parts in space--time,
single-valuedness of $\beta \cdot n $,
$\overline{\nabla}_h u\cdot \bar{n}$, $u$ and $\mu_h$ on element
boundaries, the fact that $\mu_h = 0$ on $\partial\mathcal{E}_D$, and
that $u$
solves~\cref{eq:modprob}--\cref{eq:boundaryinitialconditions}. An
immediate consequence of consistency is Galerkin orthogonality: Let
$(u_h,\lambda_h) \in V_h^{\star}$ solve~\cref{eq:compactwf}, then
\begin{equation} 
  \label{eq:gal_orth}
  a_h\del{ (u,\gamma(u)) - (u_h, \lambda_h), (v_h,\mu_h) } = 0, 
  \quad \forall (v_h,\mu_h) \in V^{\star}_h.
\end{equation}
%

%-----------------------------------------------------------------------------
\subsection{Boundedness}
\label{ss:boundedness}

We now turn to the boundedness of the bilinear form.

\begin{lemma}[Boundedness] 
  \label{lem:bndness}
  There exists a $c_B>0$, independent of $h_K$ and $\Delta t$, such
  that for all $\boldsymbol{u} = (u, \gamma(u)) \in V^{\star}(h)$ and all
  $(v_h,\mu_h) \in V_h^{\star}$,
  \begin{equation}
    \left| a_h(\boldsymbol{u}, (v_h,\mu_h)) \right| \le c_B \tnorm{ \boldsymbol{u}}_{s,\star} \tnorm{ (v_h,\mu_h) }_s.
  \end{equation}
\end{lemma}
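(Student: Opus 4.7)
The plan is to split $a_h=a_h^a+a_h^d$ and estimate each of the five resulting integrals by Cauchy--Schwarz, pairing each Cauchy--Schwarz factor with a specific component of either $\tnorm{\boldsymbol{u}}_{s,\star}$ or $\tnorm{(v_h,\mu_h)}_s$. The anisotropic weights built into these norms are tailored for exactly this matching. Two features complicate the bookkeeping: the volume term $\int u\,\partial_t v\,dx$ requires the weight $\Delta t h_K^2/(\Delta t+h_K)$ and its reciprocal, and the two consistency trace terms of $a_h^d$ treat trial and test functions asymmetrically, forcing an application of the inverse trace inequality \emph{only} on the test side.

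For $a_h^a$, I would first write $\beta u\cdot\nabla_h v = u\,\partial_t v + u\bar\beta\cdot\overline{\nabla}_h v$. The temporal piece is controlled by
\begin{equation*}
\Bigl|\sum_{\mathcal{K}\in\mathcal{T}_h}\int_\mathcal{K} u\,\partial_t v_h\,dx\Bigr|
\le \Bigl(\sum_\mathcal{K}\tfrac{\Delta t+h_K}{\Delta t h_K^2}\norm{u}_\mathcal{K}^2\Bigr)^{1/2}
\Bigl(\sum_\mathcal{K}\tfrac{\Delta t h_K^2}{\Delta t+h_K}\norm{\partial_t v_h}_\mathcal{K}^2\Bigr)^{1/2},
\end{equation*}
matching the two anisotropic summands of the respective norms directly. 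The spatial piece is bounded by $\nu^{-1/2}\|\bar\beta\|_{L^\infty}\norm{u}_\mathcal{E}\bigl(\sum_\mathcal{K}\nu\norm{\overline{\nabla}_h v_h}_\mathcal{K}^2\bigr)^{1/2}$, with $\nu^{-1/2}\|\bar\beta\|_{L^\infty}$ absorbed into $c_B$. For the interior boundary integral, decomposing $\partial\mathcal{K}=\partial\mathcal{K}^+\cup\partial\mathcal{K}^-$ collapses the integrand algebraically to $\beta_n u(v_h-\mu_h)$ on outflow and $-\beta_n\lambda(v_h-\mu_h)$ on inflow; Cauchy--Schwarz then pairs $\norm{\beta_n^{1/2}u}_{\partial\mathcal{K}^+}$ and $\norm{\beta_n^{1/2}\lambda}_{\partial\mathcal{K}^-}$ (both in $\tnorm{\boldsymbol{u}}_{s,\star}$) against $\norm{\beta_n^{1/2}(v_h-\mu_h)}_{\partial\mathcal{K}}$ (in $\tnorm{(v_h,\mu_h)}_s$). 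The $\partial\mathcal{E}_N$ integral is nonzero only on outflow and is treated identically, using the $\norm{\beta_n^{1/2}\mu_h}_{\partial\mathcal{E}_N}$ component of the $s$-norm.

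For $a_h^d$, the volume contribution and the penalty term yield to a direct Cauchy--Schwarz pairing the $\nu^{1/2}\norm{\overline{\nabla}_h\cdot}_\mathcal{K}$ and $(\nu/h_K)^{1/2}\norm{\cdot-\cdot}_\mathcal{Q}$ summands present in both norms. The crux is the two symmetric trace terms. Writing $\nu(u-\lambda)\overline{\nabla}_h v_h\cdot\bar n = \bigl((\nu/h_K)^{1/2}(u-\lambda)\bigr)\cdot\bigl((\nu h_K)^{1/2}\overline{\nabla}_h v_h\cdot\bar n\bigr)$, Cauchy--Schwarz combined with the discrete trace inequality~\cref{eq:s_trace} \emph{applied to the test function} bounds $(\nu h_K)^{1/2}\norm{\overline{\nabla}_h v_h\cdot\bar n}_\mathcal{Q}$ by $c_{T,\mathcal{Q}}\nu^{1/2}\norm{\overline{\nabla}_h v_h}_\mathcal{K}$, which sits in $\tnorm{(v_h,\mu_h)}_s$. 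The mirror term $\nu\overline{\nabla}_h u\cdot\bar n(v_h-\mu_h)$ cannot be reduced this way, since $u\notin V_h^{(p_t,p_s)}$; this is precisely why $\tnorm{\cdot}_{s,\star}$ carries the extra summand $h_K\nu\norm{\overline{\nabla}_h v\cdot\bar n}_\mathcal{Q}^2$, which directly absorbs $(\nu h_K)^{1/2}\norm{\overline{\nabla}_h u\cdot\bar n}_\mathcal{Q}$. The main obstacle is not any single estimate but the careful accounting required to verify that the assembled constant $c_B$ collects only $\|\bar\beta\|_{L^\infty}$, $\nu$ (through $\nu^{-1/2}$), $\alpha$, $c_{T,\mathcal{Q}}$, and the shape-regularity constant $c_r$, with no residual $h_K$ or $\Delta t$ dependence.
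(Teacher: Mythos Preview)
Your proposal is correct and follows essentially the same approach as the paper's proof: the same splitting of $a_h$ into advective and diffusive parts, the identical anisotropic weight pairing for the $u\,\partial_t v_h$ term, the same inflow/outflow decomposition of the upwind facet integral, and the same asymmetric treatment of the two consistency trace terms in $a_h^d$ (trace inequality \cref{eq:s_trace} on the test side, the extra summand of $\tnorm{\cdot}_{s,\star}$ on the trial side). Your final accounting of the constant $c_B$ also matches the paper's, which obtains $c_B = c_{B,a}+c_{B,d}$ with $c_{B,a}=2+\sqrt{2}+\norm{\bar\beta}_{L^\infty(\mathcal{E})}\nu^{-1/2}$ and $c_{B,d}=2+\alpha+c_{T,\mathcal{Q}}$.
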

\begin{proof}
  We will begin by bounding each term of the advective part of the
  bilinear form, $a_h^a(\boldsymbol{u}, \boldsymbol{v}_h)$. We note that
  \begin{equation}
    \begin{split}
      \label{eq:adv_bilin_bnd}
      |a_h^a(\boldsymbol{u}, (v_h,\mu_h))| \le& 
      \bigg|\sum_{\mathcal{K}\in\mathcal{T}_h}\int_{\mathcal{K}} \beta u \cdot \nabla_h v_h \dif x \bigg|
      + \bigg|\int_{\partial\mathcal{E}_N}\tfrac{1}{2}\del{\beta \cdot n  
        + |\beta \cdot n |}\gamma(u)\mu_h \dif s \bigg|
      \\ &
      + \bigg| \sum_{\mathcal{K}\in\mathcal{T}_h}\int_{\partial\mathcal{K}} \tfrac{1}{2}\del{
        \beta \cdot n  (u+\gamma(u))
        + |\beta \cdot n |(u - \gamma(u))} \del{v_h - \mu_h} \dif s \bigg|.      
    \end{split}
  \end{equation}
  To obtain a bound for the first term on the right hand side of
  \cref{eq:adv_bilin_bnd}, we first recall
  $\beta \cdot \nabla_h v_h = \partial_t v_h + \bar{\beta} \cdot
  \overline{\nabla}_h v_h$, so that
  \begin{equation}
    \left|\sum_{\mathcal{K}\in\mathcal{T}_h}\int_{\mathcal{K}} \beta u \cdot \nabla_h v_h \dif x \right|
    \le \sum_{\mathcal{K}\in\mathcal{T}_h} \int_{\mathcal{K}} \left| u\partial_t v_h \right| \dif x
    + \sum_{\mathcal{K}\in\mathcal{T}_h} \int_{\mathcal{K}} \left|\bar{\beta}u \cdot\overline{\nabla}_h v_h\right| \dif x.
  \end{equation}
  Both terms on the right hand side may be bounded using the
  Cauchy--Schwarz inequality:
  \begin{align}
    \sum_{\mathcal{K}\in\mathcal{T}_h}\int_{\mathcal{K}} \left| u\partial_t v_h \right| \dif x 
    &\le \sum_{\mathcal{K}\in\mathcal{T}_h} \del{ \frac{\Delta t + h_K}{\Delta t h_K^2} }^{\frac{1}{2}} \norm{u}_{\mathcal{K}}
      \del{ \frac{\Delta t h_K^2}{\Delta t + h_K} }^{\frac{1}{2}} \norm{\partial_t v_h}_{\mathcal{K}}\\
    & \le \tnorm{ \boldsymbol{u}}_{s,\star}    \tnorm{ (v_h,\mu_h) }_s, \notag
    \\
    \sum_{\mathcal{K}\in\mathcal{T}_h} \int_{\mathcal{K}} \left|\bar{\beta}u \cdot\overline{\nabla}_h v_h\right| \dif x 
    & \le \norm{\bar{\beta}}_{L^{\infty}(\mathcal{E})}  \sum_{\mathcal{K}\in\mathcal{T}_h} \nu^{-1/2} \norm{u}_{\mathcal{K}}
      \nu^{1/2} \norm{\overline{\nabla}_h v_h}_{\mathcal{K}} \\
    & \le  \norm{\bar{\beta}}_{L^{\infty}(\mathcal{E})}\nu^{-1/2} \tnorm{ \boldsymbol{u}}_{s,\star}    \tnorm{ (v_h,\mu_h) }_s. \notag
  \end{align}
  The integral over the mixed boundary $\partial\mathcal{E}_N$
  in~\cref{eq:adv_bilin_bnd} may also be bounded via the
  Cauchy--Schwarz inequality:
  \begin{align} 
    \left| \int_{\partial\mathcal{E}_N}\tfrac{1}{2}\del{\beta \cdot n  
    + |\beta \cdot n |}\gamma(u)\mu_h \dif s \right| 
    & \le  \norm{ \beta_n^{1/2} \gamma(u) }_{\partial\mathcal{E}_N} \notag
      \norm{ \beta_n^{1/2} \mu_h }_{\partial\mathcal{E}_N} \\
    & \le \tnorm{ \boldsymbol{u}}_{s,\star}    \tnorm{ (v_h,\mu_h) }_s. \notag
  \end{align}
  For the final term appearing on the right hand side of
  \cref{eq:adv_bilin_bnd}, we have the bound
  \begin{align}
    \bigg| \sum_{\mathcal{K}\in\mathcal{T}_h} & \int_{\partial\mathcal{K}}  \tfrac{1}{2}\del{
                                                \beta \cdot n  (u+\gamma(u))
                                                + |\beta \cdot n |(u - \gamma(u))} \del{v_h - \mu_h} \dif s \bigg| \\ \notag
                                              & \le  \sum_{\mathcal{K}\in\mathcal{T}_h}\int_{\partial\mathcal{K}^{+}} \left|
                                                \beta \cdot n  \del{v_h - \mu_h} u \right| \dif s 
                                                + \sum_{\mathcal{K}\in\mathcal{T}_h}\int_{\partial\mathcal{K}^{-}} \left| 
                                                \beta \cdot n  \del{v_h - \mu_h} \gamma(u) \right| \dif s \\ \notag
                                              & \le  \sum_{\mathcal{K}\in\mathcal{T}_h} \del{ \norm{\beta_n^{1/2} u}_{\partial \mathcal{K}^+}
                                                +  \norm{\beta_n^{1/2}\gamma(u)}_{\partial \mathcal{K}^-} }
                                                \norm{ \beta_n^{1/2}(v_h - \mu_h) }_{\partial \mathcal{K}} \\ \notag
                                              & \le \sqrt{2} \tnorm{ \boldsymbol{u}}_{s,\star}    \tnorm{ (v_h,\mu_h) }_s,
  \end{align} 
  where we used the triangle inequality for the first inequality, the
  Cauchy--Schwarz inequality for the second inequality, and finally
  combined the discrete Cauchy--Schwarz inequality with the fact that
  $(a+b)^2 \le 2(a^2 + b^2)$. Collecting the above bounds we obtain,
  for all $\boldsymbol{u} \in V^{\star}(h)$ and
  $(v_h,\mu_h) \in V_h^{\star}$,
  \begin{equation} 
    \label{eq:adv_bnd}
    \left| a^a_h(\boldsymbol{u}, (v_h,\mu_h)) \right| \le c_{B,a} \tnorm{ \boldsymbol{u}}_{s,\star} \tnorm{ (v_h,\mu_h) }_s,
  \end{equation}
  where
  $c_{B,a} = 2 + \sqrt{2} +
  \norm{\bar{\beta}}_{L^{\infty}(\mathcal{E})}\nu^{-1/2}$.
  
  We now shift our focus to the diffusive part of the bilinear form,
  $a_h^d(\boldsymbol{u}, (v_h,\mu_h))$. We note that
  \begin{multline}
    \label{eq:diff_bilin_bnd}
    |a_h^d(\boldsymbol{u}, (v_h,\mu_h))|  \le
    \bigg|\sum_{\mathcal{K}\in\mathcal{T}_h}\int_{\mathcal{K}}\nu\overline{\nabla}_h u \cdot\overline{\nabla}_h v_h \dif x \bigg|
    + \bigg| \sum_{\mathcal{K}\in\mathcal{T}_h}\int_{\mathcal{Q}}\frac{\nu \alpha}{h_K}(u - \gamma(u))\del{v_h - \mu_h} \dif s \bigg|
    \\ 
    + \bigg| \sum_{\mathcal{K}\in\mathcal{T}_h}\int_{\mathcal{Q}} \sbr{ \nu(u - \gamma(u))\overline{\nabla}_h v_h \cdot \bar{n} 
      + \nu\overline{\nabla}_h u\cdot \bar{n} \del{v_h - \mu_h}}\dif s \bigg|.
  \end{multline}
  By the Cauchy--Schwarz inequality, the first two terms on the right
  hand side of \cref{eq:diff_bilin_bnd} can be bounded by
  $(1+\alpha)\tnorm{ \boldsymbol{u}}_{s,\star} \tnorm{ (v_h,\mu_h) }_s$. To
  bound the remaining term of $a_h^d(\boldsymbol{u}, (v_h,\mu_h))$, we note
  that
  \begin{align}
    \label{eq:remainingtermad}
    \bigg| \sum_{\mathcal{K}\in\mathcal{T}_h}\int_{\mathcal{Q}} & \sbr{ \nu(u - \gamma(u))\overline{\nabla}_h v_h \cdot \bar{n} + \nu\overline{\nabla}_h u\cdot \bar{n} \del{v_h - \mu_h}}\dif s \bigg| \\
                                                                & \le \sum_{\mathcal{K}\in\mathcal{T}_h}  \int_{\mathcal{Q}} \left| \nu(u - \gamma(u))\overline{\nabla}_h v_h \cdot \bar{n}\right| \dif s \notag
                                                                  + \sum_{\mathcal{K}\in\mathcal{T}_h}\int_{\mathcal{Q}} \left| \nu\overline{\nabla}_h u\cdot \bar{n} \del{v_h - \mu_h} \right|\dif s.
  \end{align}
  Application of the Cauchy--Schwarz inequality to the first term on
  the right hand side of~\cref{eq:remainingtermad}, followed by the
  trace inequality \cref{eq:s_trace}, yields
  \begin{align}
    \sum_{\mathcal{K}\in\mathcal{T}_h}  \int_{\mathcal{Q}} & \left| \nu(u - \gamma(u))\overline{\nabla}_h v_h \cdot \bar{n}\right| \dif s
    \\ \notag
    & \leq c_{T,\mathcal{Q}} \del{ \sum_{\mathcal{K}\in\mathcal{T}_h} \frac{\nu}{h_K} \norm{u - \gamma(u)}_{\mathcal{Q}}^2 }^{\frac{1}{2}}
      \del{ \sum_{\mathcal{K}\in\mathcal{T}_h} \nu \norm{\overline{\nabla}_h v_h}_{\mathcal{K}}^2 }^{\frac{1}{2}} 
    \\ \notag
    & \leq c_{T,\mathcal{Q}} \tnorm{ \boldsymbol{u} }_{s, \star}    \tnorm{ (v_h,\mu_h) }_s.
  \end{align}
  Finally, to bound the second term on the right hand side
  of~\cref{eq:remainingtermad}, we apply the Cauchy--Schwarz
  inequality:
  \begin{equation}
    \label{eq:diff_term}
    \sum_{\mathcal{K}\in\mathcal{T}_h}\int_{\mathcal{Q}} \left| \nu\overline{\nabla}_h u\cdot \bar{n} \del{v_h - \mu_h} \right|\dif s
    \le  \tnorm{ \boldsymbol{u}}_{s, \star} \tnorm{ (v_h,\mu_h) }_s.
  \end{equation}
  Therefore, for all $\boldsymbol{u} \in V^{\star}(h)$ and
  $(v_h,\mu_h) \in V_h^{\star}$,
  \begin{equation} 
    \label{eq:diff_bnd}
    \left| a^d_h(\boldsymbol{u}, (v_h,\mu_h)) \right| \le c_{B,d} \tnorm{ \boldsymbol{u}}_{s,\star} \tnorm{ (v_h,\mu_h) }_s,
  \end{equation}
  where $c_{B,d} = 2 + \alpha + c_{T,\mathcal{Q}}$. Combining
  \cref{eq:adv_bnd} with \cref{eq:diff_bnd} yields the assertion with
  $c_B = c_{B,a} + c_{B,d}$.
\end{proof}

In the sequel, we will also make use of the following bound valid for
all $(u_h, \lambda_h), (v_h,\mu_h) \in V^{\star}_h$:
\begin{equation}
  \label{eq:boundedness_ahd_Vstar}
  | a_h^d((u_h, \lambda_h), (v_h,\mu_h))| \le c_{d}\tnorm{ (u_h, \lambda_h) }_v\tnorm{ (v_h,\mu_h) }_v,
\end{equation}
which follows immediately from \cref{eq:diff_bnd} using the
equivalence of norms on finite dimensional spaces.  However, to
quantify the constant $c_{d}$ to ensure its independence of $h_K$ and
$\Delta t$, we may simply repeat the proof of the bound on
\cref{eq:diff_bilin_bnd}, instead applying the trace inequality
\cref{eq:s_trace} to the term in \cref{eq:diff_term} to obtain
$c_{d} = 1 + \alpha + 2c_{T,\mathcal{Q}}$.
%

%-----------------------------------------------------------------------------
\subsection{Stability}
\label{ss:stability}

Next we demonstrate that the method is stable in the norm
\cref{eq:hdgnorm} over the space $V_{h}^{\star}$:

\begin{lemma}[Stability]
  \label{lem:stability}
  Let $\alpha$ be the penalty parameter appearing
  in~\cref{eq:bilin_ad} which is such that
  $\alpha > c_{T,\mathcal{Q}}^2$ where $c_{T,\mathcal{Q}}$ is the
  constant from the local trace inequality \cref{eq:s_trace}. Further,
  let $c_{\alpha} = (\alpha - c_{T,\mathcal{Q}}^2)/(1+\alpha)$ and
  suppose there exists a constant $\beta_0 > 0$ such that
  \begin{equation}
    \label{eq:def_beta0}
    \frac{c_{\alpha}\nu}{c_p^2}
    +  \inf_{x\in\mathcal{E}}\overline{\nabla}_h \cdot \bar{\beta} \ge \beta_0 > 0,
  \end{equation}
  where $c_p$ is the constant from the discrete Poincar\'{e}
  inequality \cref{eq:discpoinc}.  Then there exists a constant $c_c$,
  independent of $h_K$ and $\Delta t$, such that
  \begin{equation}
    \label{eq:ah_vv_combined_final}
    a_h((v_h,\mu_h), (v_h,\mu_h)) \ge c_c \tnorm{ (v_h,\mu_h) }_v^2, 
    \qquad \forall (v_h,\mu_h) \in V_h^{\star}.
  \end{equation}
\end{lemma}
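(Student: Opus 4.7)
The plan is to test the bilinear form against $(v_h,\mu_h)$ itself, treat the advective and diffusive parts separately, and then combine them using the discrete Poincaré inequality \cref{eq:discpoinc} together with the assumption \cref{eq:def_beta0}.

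First, I would analyze $a_h^a((v_h,\mu_h),(v_h,\mu_h))$ by rewriting the volume term via the identity $v_h\beta\cdot\nabla_h v_h = \tfrac{1}{2}\beta\cdot\nabla_h(v_h^2)$ and elementwise integration by parts, recalling that $\nabla\cdot\beta=\overline{\nabla}\cdot\bar\beta$ since the first component of $\beta$ is $1$. This produces a volume term $\tfrac{1}{2}(\overline{\nabla}\cdot\bar\beta)v_h^2$ and a boundary term $-\tfrac{1}{2}\int_{\partial\mathcal{K}}\beta\cdot n\, v_h^2\,\dif s$. In the facet term of $a_h^a$, I would expand $(\beta\cdot n)(v_h+\mu_h)(v_h-\mu_h)=(\beta\cdot n)(v_h^2-\mu_h^2)$ so that the $v_h^2$ boundary contributions cancel. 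Exploiting the single-valuedness of $\mu_h$ on interior facets (where outward normals of neighbouring elements are opposite) and the fact that $\mu_h=0$ on $\partial\mathcal{E}_D$, the sum $\sum_{\mathcal{K}}\int_{\partial\mathcal{K}}\beta\cdot n\,\mu_h^2\,\dif s$ collapses to an integral over $\partial\mathcal{E}_N$, which combines with the $\partial\mathcal{E}_N$ term of $a_h^a$ to give exactly $\tfrac{1}{2}\tnorm[\big]{\beta_n^{1/2}\mu_h}_{\partial\mathcal{E}_N}^2$. The upshot is the identity
\begin{equation*}
a_h^a((v_h,\mu_h),(v_h,\mu_h)) = \tfrac{1}{2}\int_{\mathcal{E}}(\overline{\nabla}\cdot\bar\beta)v_h^2\,\dif x + \tfrac{1}{2}\norm{\beta_n^{1/2}\mu_h}_{\partial\mathcal{E}_N}^2 + \tfrac{1}{2}\sum_{\mathcal{K}\in\mathcal{T}_h}\norm{\beta_n^{1/2}(v_h-\mu_h)}_{\partial\mathcal{K}}^2 .
\end{equation*}

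For $a_h^d((v_h,\mu_h),(v_h,\mu_h))$, the symmetric consistency term is the cross term $-2\sum_{\mathcal{K}}\int_{\mathcal{Q}}\nu(v_h-\mu_h)\overline{\nabla}_h v_h\cdot\bar n\,\dif s$. I would estimate this by Cauchy--Schwarz with a weight splitting $(\nu/h_K)^{1/2}(v_h-\mu_h)$ against $(\nu h_K)^{1/2}\overline{\nabla}_h v_h\cdot\bar n$, apply Young's inequality with parameter $\delta$, and then invoke the trace inequality \cref{eq:s_trace} to convert $h_K\norm{\overline{\nabla}_h v_h\cdot\bar n}_{\mathcal{Q}}^2$ into $c_{T,\mathcal{Q}}^2\norm{\overline{\nabla}_h v_h}_{\mathcal{K}}^2$. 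Choosing $\delta$ optimally so that both remaining coefficients are nonnegative (this is where the hypothesis $\alpha>c_{T,\mathcal{Q}}^2$ enters) yields
\begin{equation*}
a_h^d((v_h,\mu_h),(v_h,\mu_h)) \ge c_\alpha \sum_{\mathcal{K}\in\mathcal{T}_h}\nu\norm{\overline{\nabla}_h v_h}_{\mathcal{K}}^2 + c_\alpha \sum_{\mathcal{K}\in\mathcal{T}_h}\frac{\nu}{h_K}\norm{v_h-\mu_h}_{\mathcal{Q}}^2 ,
\end{equation*}
with precisely the stated $c_\alpha=(\alpha-c_{T,\mathcal{Q}}^2)/(1+\alpha)$.

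To finish, I would split $a_h^d = \tfrac{1}{2}a_h^d + \tfrac{1}{2}a_h^d$, applying the discrete Poincaré inequality \cref{eq:discpoinc} to one half to bound $(c_\alpha\nu)/(2c_p^2)\norm{v_h}_{\mathcal{E}}^2$ from below, and keeping the other half as the diffusive stability contribution. Adding the advective identity from the first step and combining the $\norm{v_h}_{\mathcal{E}}^2$ coefficients as $\tfrac{1}{2}(c_\alpha\nu/c_p^2+\inf_{\mathcal{E}}\overline{\nabla}\cdot\bar\beta)\ge\beta_0/2$ via \cref{eq:def_beta0} gives the desired bound with $c_c=\tfrac{1}{2}\min(1,c_\alpha,\beta_0)$.

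The main obstacle is the diffusive estimate: pinning down $c_\alpha=(\alpha-c_{T,\mathcal{Q}}^2)/(1+\alpha)$ rather than some weaker constant requires the right Young parameter so that both $1-\epsilon c_{T,\mathcal{Q}}^2$ (the coefficient kept in front of $\sum\nu\norm{\overline{\nabla}_h v_h}_{\mathcal{K}}^2$) and $\alpha-\epsilon^{-1}$ (the coefficient kept on the jump term) simultaneously dominate $c_\alpha$ times the respective stability terms; verifying feasibility reduces to the inequality $c_{T,\mathcal{Q}}^2\le(\alpha-c_\alpha)(1-c_\alpha)$, which, after algebra, becomes $(\alpha-c_{T,\mathcal{Q}}^2)^2\ge 0$. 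Everything else is bookkeeping of integration-by-parts, normal cancellations, and Poincaré.
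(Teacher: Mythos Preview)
Your proof is correct and follows essentially the same route as the paper: the advective identity, the diffusive coercivity with constant $c_\alpha$, and the half-splitting of the diffusive terms combined with the discrete Poincar\'e inequality and \cref{eq:def_beta0} are all exactly what the paper does. The only cosmetic difference is that the paper obtains $c_\alpha=(\alpha-c_{T,\mathcal{Q}}^2)/(1+\alpha)$ by quoting the algebraic inequality $x^2-2\psi xy+\alpha y^2\ge\frac{\alpha-\psi^2}{1+\alpha}(x^2+y^2)$ directly rather than rederiving it via a Young-parameter optimization, and the paper states the final constant as $c_c=\tfrac{1}{2}\min(\beta_0,c_\alpha)$, which coincides with your $\tfrac{1}{2}\min(1,c_\alpha,\beta_0)$ since $c_\alpha<1$ always.
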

\begin{proof}
  By definition of the bilinear form $a_h^a(\cdot,\cdot)$
  in~\cref{eq:bilin_aa},
  \begin{multline}
    \label{eq:aha_vv}
      a_h^a((v_h,\mu_h), (v_h,\mu_h)) =
      \frac{1}{2}\sum_{\mathcal{K}\in\mathcal{T}_h}\int_{\mathcal{K}}v_h^2\nabla_h \cdot\beta  \dif x 
      -\sum_{\mathcal{K}\in\mathcal{T}_h}\int_{\partial\mathcal{K}}\frac{1}{2}\beta \cdot n v_h^2 \dif s
      \\
      + \int_{\partial\mathcal{E}_N}\frac{1}{2}\del{\beta \cdot n  
	+ |\beta \cdot n |}\mu_h^2 \dif s
      + \sum_{\mathcal{K}\in\mathcal{T}_h}\int_{\partial\mathcal{K}} \frac{1}{2}
      \beta \cdot n  (v_h+\mu_h)(v_h - \mu_h) \dif s
      \\
      + \sum_{\mathcal{K}\in\mathcal{T}_h}\int_{\partial\mathcal{K}} \frac{1}{2}
      |\beta \cdot n |(v_h - \mu_h)^2 \dif s,      
  \end{multline}
  where we used that
  $-2v_h\beta \cdot\nabla_h v_h = -\nabla_h \cdot(\beta v_h^2) +
  v_h^2\nabla_h \cdot\beta$
  and applied Gauss' Theorem. Expanding the fourth integral on the
  right hand side and using the fact that $\beta \cdot n $ and $\mu_h$
  are single-valued on element boundaries, and that $\mu_h = 0$ on
  $\partial\mathcal{E}_D$,~\cref{eq:aha_vv} reduces to
  \begin{multline}
    \label{eq:aha_vv_f}
    a_h^a((v_h,\mu_h), (v_h,\mu_h)) = 
    \frac{1}{2}\sum_{\mathcal{K}\in\mathcal{T}_h}\int_{\mathcal{K}}v_h^2\nabla_h \cdot\beta  \dif x 
    + \int_{\partial\mathcal{E}_N}\frac{1}{2} |\beta \cdot n | \mu_h^2 \dif s
    \\
    + \sum_{\mathcal{K}\in\mathcal{T}_h}\int_{\partial\mathcal{K}} \frac{1}{2}
    |\beta \cdot n |(v_h - \mu_h)^2 \dif s.      
  \end{multline}
  Next, by definition of the bilinear form $a_h^d(\cdot, \cdot)$
  in~\cref{eq:bilin_ad},
  \begin{multline}
    \label{eq:ahd_vv}
    a_h^d((v_h,\mu_h), (v_h,\mu_h)) = 
    \sum_{\mathcal{K}\in\mathcal{T}_h}\int_{\mathcal{K}}\nu\envert{\overline{\nabla}_h v_h}^2 \dif x 
    + \sum_{\mathcal{K}\in\mathcal{T}_h}\int_{\mathcal{Q}}\frac{\nu \alpha}{h_K}\del{v_h - \mu_h}^2 \dif s
    \\
    - \sum_{\mathcal{K}\in\mathcal{T}_h}\int_{\mathcal{Q}} 2\nu\overline{\nabla}_h v_h\cdot \bar{n} \del{v_h - \mu_h}\dif s.
  \end{multline}
  Applying the Cauchy--Schwarz inequality and the trace inequality
  \cref{eq:s_trace} to the third term on the right-hand side
  of~\cref{eq:ahd_vv},
  \begin{equation}
    \label{eq:iptermbounded}
    \envert{2\sum_{\mathcal{K}\in\mathcal{T}_h}\int_{\mathcal{Q}}\nu\overline{\nabla}_h v_h\cdot \bar{n} (v_h - \mu_h) \dif s}
    \le 2 \nu^{1/2} c_{T,\mathcal{Q}} \norm{\overline{\nabla}_h v_h}_{\mathcal{K}}
    \nu^{1/2}h_K^{-1/2}\norm{v_h-\mu_h}_{\mathcal{Q}}.
  \end{equation}
  Combining~\cref{eq:ahd_vv} and~\cref{eq:iptermbounded}, and choosing
  $\alpha > c_{T,\mathcal{Q}}^2$,
  \begin{equation}
    \label{eq:ahd_vv_b}
    \begin{split}
      a_h^d(&(v_h,\mu_h), (v_h,\mu_h)) \\
      \ge &
      \sum_{\mathcal{K}\in\mathcal{T}_h} \del{
	\nu \norm{\overline{\nabla}_h v_h}^2_{\mathcal{K}}
	- 2 c_{T,\mathcal{Q}}\nu \norm{\overline{\nabla}_h v_h}_{\mathcal{K}}h_K^{-1/2}\norm{v_h-\mu_h}_{\mathcal{Q}}
	+ \frac{\nu \alpha}{h_K}\norm{v_h - \mu_h}^2_{\mathcal{Q}} }
      \\
      \ge &
      \sum_{\mathcal{K}\in\mathcal{T}_h} 
      \frac{\alpha - c_{T,\mathcal{Q}}^2}{1+\alpha}
      \del{ \nu \norm{\overline{\nabla}_h v_h}^2_{\mathcal{K}}
        + \frac{\nu}{h_K}\norm{v_h - \mu_h}^2_{\mathcal{Q}} }.
    \end{split}
  \end{equation}
  The second inequality follows from noting that for
  $\alpha > \psi^2$, with $\psi$ a positive real number, it holds that
  $x^2 - 2 \psi xy + \alpha y^2 \ge
  (\alpha-\psi^2)(x^2+y^2)/(1+\alpha)$,
  for $x,y\in \mathbb{R}$~\cite{Pietro:book}, and taking
  $x = \nu^{1/2}\norm{\overline{\nabla}_h v_h}_{\mathcal{K}}$,
  $y = \nu^{1/2} h_K^{-1/2} \norm{v_h - \mu_h}_{ \mathcal{Q}}$ and
  $\psi = c_{T,\mathcal{Q}}$. Combining~\cref{eq:aha_vv_f}
  and~\cref{eq:ahd_vv_b}, and using that
  $\nabla_h \cdot\beta =\overline{\nabla}_h\cdot \bar{\beta}$,
  \begin{multline}
    \label{eq:ah_vv_combined}
      a_h((v_h,\mu_h), (v_h,\mu_h)) \ge
      \sum_{\mathcal{K}\in\mathcal{T}_h}
      \frac{1}{2}\int_{\mathcal{K}}v_h^2 \overline{\nabla}_h \cdot \bar{\beta} \dif x 
      + \frac{1}{2} \norm{ \beta_n^{1/2} \mu_h }^2_{\partial\mathcal{E}_N}
      \\
      + \frac{1}{2} \sum_{\mathcal{K}\in\mathcal{T}_h}\norm{ \beta_n^{1/2}(v_h - \mu_h) }^2_{\partial \mathcal{K}}
      + \sum_{\mathcal{K}\in\mathcal{T}_h} c_{\alpha} \nu \norm{\overline{\nabla}_h v_h}^2_{\mathcal{K}}
      + \sum_{\mathcal{K}\in\mathcal{T}_h} c_{\alpha}\frac{\nu}{h_K}\norm{v_h - \mu_h}^2_{\mathcal{Q}}.
  \end{multline}
  Using the discrete Poincar\'{e} inequality~\cref{eq:discpoinc} and~\cref{eq:def_beta0}, we
  obtain from~\cref{eq:ah_vv_combined}:
  \begin{multline}
    \label{eq:ah_vv_combined_newbound}
    a_h((v_h,\mu_h), (v_h,\mu_h))
    \ge \frac{1}{2}\beta_0\norm{v_h}_{\mathcal{E}}^2
    + \frac{1}{2} \norm{ \beta_n^{1/2} \mu_h }^2_{\partial\mathcal{E}_N}
    \\
    + \frac{1}{2} \sum_{\mathcal{K}\in\mathcal{T}_h}\norm{ \beta_n^{1/2}(v_h - \mu_h) }^2_{\partial \mathcal{K}}
    + \frac{1}{2}c_{\alpha} \sum_{\mathcal{K}\in\mathcal{T}_h} \nu \norm{\overline{\nabla}_h v_h}^2_{\mathcal{K}}
    + \frac{1}{2}c_{\alpha}\sum_{\mathcal{K}\in\mathcal{T}_h} \frac{\nu}{h_K}\norm{v_h - \mu_h}^2_{\mathcal{Q}}.
  \end{multline}
  The result follows with $c_c = \min(\beta_0, c_{\alpha})/2$.
\end{proof}

%-----------------------------------------------------------------------------
\subsection{The inf-sup condition} 
\label{ss:infsupstability}

Stability was proven in~\cref{ss:stability} with respect to the norm
$\tnorm{(\cdot, \cdot)}_v$. To obtain the error estimates
in~\cref{s:erroranalysis}, we instead consider a norm with additional
control over the time derivatives of the solution. For this we prove
an inf-sup condition with respect to the stronger norm
\cref{eq:infsupnorm} following ideas in~\cite{Cangiani:2017,
  Pietro:book, Wells:2011}. We first state the inf-sup condition. 

\begin{theorem}[The inf-sup condition]
  \label{thm:infsupcondition}
  There exists $c_i > 0$, independent of $h_K$ and $\Delta t$, such
  that for all $(w_h, \lambda_h) \in V_h^{\star}$
  \begin{equation}
    \label{eq:infsupcondition}
    c_i \tnorm{ (w_h, \lambda_h) }_s \le \sup_{ (v_h, \mu_h) \in V_h^{\star} } 
    \frac{ a_h((w_h, \lambda_h), (v_h, \mu_h)) }{\tnorm{ (v_h, \mu_h) }_s} \,.
  \end{equation}
\end{theorem}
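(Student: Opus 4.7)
The plan is to combine the coercivity bound from \cref{lem:stability}, which already gives full control over $\tnorm{\cdot}_v$, with a carefully designed auxiliary test function that restores control over the missing $L^2$-norm of $\partial_t w_h$. Specifically, I would take $(v_h,\mu_h) = (w_h,\lambda_h) + \eta(z_h,0)$ for a small parameter $\eta > 0$ to be fixed later, where $z_h \in V_h^{(p_t,p_s)}$ is defined element by element by
\begin{equation*}
  z_h|_{\mathcal{K}} = \frac{\Delta t\, h_K^2}{\Delta t + h_K}\, \partial_t w_h|_{\mathcal{K}}.
\end{equation*}
This is admissible because $\partial_t w_h$ has polynomial degree at most $(p_t-1,p_s)$ on each element, so $z_h$ sits in the discrete space.

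The first step is to evaluate $a_h((w_h,\lambda_h),(z_h,0))$. Integration by parts on the volume integral $-\int_{\mathcal{K}}\beta w_h \cdot \nabla_h z_h\dif x$ appearing in $a_h^a$, together with the identity $\beta \cdot \nabla_h w_h = \partial_t w_h + \bar\beta \cdot \overline{\nabla}_h w_h$, produces the crucial contribution
\begin{equation*}
  \sum_{\mathcal{K}\in\mathcal{T}_h}\int_{\mathcal{K}}(\partial_t w_h) z_h\dif x
  = \sum_{\mathcal{K}\in\mathcal{T}_h}\frac{\Delta t\, h_K^2}{\Delta t + h_K}\norm{\partial_t w_h}_{\mathcal{K}}^2,
\end{equation*}
which is precisely the quantity absent from $\tnorm{\cdot}_v$. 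The remaining pieces---the leftover volume terms $\int \bar\beta\cdot\overline{\nabla}_h w_h\, z_h$ and $\int(\nabla\cdot\beta) w_h z_h$, the element-boundary integrals (which after cancellation of the $\beta\cdot n\, w_h z_h$ contributions reduce to an integral of $|\beta\cdot n|(w_h-\lambda_h) z_h$ over $\partial\mathcal{K}^-$), and every term of $a_h^d((w_h,\lambda_h),(z_h,0))$---I would bound by Cauchy--Schwarz paired with the inverse and trace inequalities of \cref{lem:invtrace}. The weight $\Delta t\, h_K^2/(\Delta t + h_K)$ is engineered so that whenever \cref{eq:t_inv}--\cref{eq:st_trace} are applied to $z_h$, the resulting factor collapses to a constant multiple of $\bigl(\Delta t\, h_K^2/(\Delta t + h_K)\bigr)^{1/2}\norm{\partial_t w_h}_{\mathcal{K}}$, after which Young's inequality yields, for any $\varepsilon\in(0,1)$,
\begin{equation*}
  a_h((w_h,\lambda_h),(z_h,0)) \ge (1-\varepsilon)\sum_{\mathcal{K}\in\mathcal{T}_h}\frac{\Delta t\, h_K^2}{\Delta t+h_K}\norm{\partial_t w_h}_{\mathcal{K}}^2 - C(\varepsilon)\tnorm{(w_h,\lambda_h)}_v^2.
\end{equation*}

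Fixing $\varepsilon = 1/2$ and adding $\eta$ times the previous display to the coercivity estimate of \cref{lem:stability}, I would obtain a lower bound by $(c_c - \eta C)\tnorm{(w_h,\lambda_h)}_v^2 + \tfrac{\eta}{2}\sum_{\mathcal{K}}\frac{\Delta t h_K^2}{\Delta t + h_K}\norm{\partial_t w_h}_{\mathcal{K}}^2$. Choosing $\eta$ small enough that $c_c - \eta C \ge c_c/2$ produces a bound below by a positive multiple of $\tnorm{(w_h,\lambda_h)}_s^2$. To control the denominator I would verify separately, by the same type of computation with \cref{lem:invtrace}, that $\tnorm{(z_h,0)}_s \le C_z\,\tnorm{(w_h,\lambda_h)}_s$, so that $\tnorm{(w_h,\lambda_h)+\eta(z_h,0)}_s \le (1+\eta C_z)\tnorm{(w_h,\lambda_h)}_s$. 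This yields the inf--sup constant $c_i = \tfrac{1}{2}\min(c_c,\eta)/(1+\eta C_z)$.

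The main obstacle is the algebraic bookkeeping of the anisotropic weights: every inverse or trace inequality applied to $z_h$ injects a factor of $\Delta t^{-1}$ or $h_K^{-1}$ that must be exactly balanced by the prefactor $\Delta t h_K^2/(\Delta t + h_K)$ and leave behind a dimensionally balanced residue absorbable into $\tnorm{(w_h,\lambda_h)}_v$. The diffusive boundary contributions involving $\overline{\nabla}_h z_h \cdot \bar n$ on $\mathcal{Q}$ are the most delicate, since they simultaneously invoke \cref{eq:s_trace} and \cref{eq:s_inv}; it is precisely their behavior that dictates the particular choice of the weight $\Delta t h_K^2/(\Delta t + h_K)$, which is neither $\Delta t$ nor $h_K^2$ alone but the harmonic combination that behaves correctly in both the advection- and diffusion-dominated regimes.
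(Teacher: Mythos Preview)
Your proposal is correct and follows essentially the same route as the paper: the same auxiliary test function $z_h|_{\mathcal{K}}=\frac{\Delta t\,h_K^2}{\Delta t+h_K}\partial_t w_h$, the same integration-by-parts decomposition of $a_h((w_h,\lambda_h),(z_h,0))$ isolating $\sum_{\mathcal{K}}\frac{\Delta t h_K^2}{\Delta t+h_K}\norm{\partial_t w_h}_{\mathcal{K}}^2$, the same use of \cref{lem:invtrace} and Young's inequality on the residual terms, the same bound $\tnorm{(z_h,0)}_s\lesssim\tnorm{(w_h,\lambda_h)}_s$, and the same combination with the coercivity of \cref{lem:stability}. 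The only cosmetic difference is that the paper writes the test function as $c_2(w_h,\lambda_h)+(z_h,0)$ with $c_2$ large rather than $(w_h,\lambda_h)+\eta(z_h,0)$ with $\eta$ small, which is an equivalent rescaling.
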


The proof of the inf-sup condition follows after the following two
intermediate results.

\begin{lemma}
  \label{lem:infsupcondition_pre1}
  Let $(w_h,\lambda_h) \in V_h^{\star}$ and let
  $z_h = \frac{\Delta t h_K^2}{\Delta t + h_K} \partial_t w_h$. There
  exists a $c_1 > 0$, independent of $h_K$ and $\Delta t$, such that
  \begin{equation*}
    \label{eq:tnormbound}
    \tnorm{ (z_h, 0) }_s \le c_1 \tnorm{ (w_h, \lambda_h) }_s \,.
  \end{equation*}
\end{lemma}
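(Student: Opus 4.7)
The plan is to bound each of the five defining terms of $\tnorm{(z_h,0)}_s^2$ individually in terms of $\tnorm{(w_h,\lambda_h)}_s^2$, making repeated use of the inverse and trace inequalities of \cref{lem:invtrace}. Because the facet component of $(z_h,0)$ is identically zero, the $\partial\mathcal{E}_N$ integral in $\tnorm{\cdot}_v$ drops out and the two jump contributions reduce to $\sum_{\mathcal{K}}\|\beta_n^{1/2}z_h\|_{\partial\mathcal{K}}^2$ and $\sum_{\mathcal{K}}(\nu/h_K)\|z_h\|_{\mathcal{Q}}^2$.

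The whole argument hinges on two elementary identities for the piecewise-constant weight $\tau_{\mathcal{K}} := \Delta t\, h_K^2/(\Delta t + h_K)$:
\begin{equation*}
\tau_{\mathcal{K}} \le h_K^2 \qquad \text{and} \qquad \tau_{\mathcal{K}}\bigl(\Delta t^{-1} + h_K^{-1}\bigr) = h_K.
\end{equation*}
The first is obvious; the second is the algebraic reason this particular choice of $z_h$ is so convenient. Squaring the second identity yields $\tau_{\mathcal{K}}^2(\Delta t^{-1}+h_K^{-1})^2 = h_K^2$, and combining the two gives $h_K^{-2}\tau_{\mathcal{K}}^2 \le \tau_{\mathcal{K}}$.

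Term by term I would proceed as follows. For the $L^2$ contribution, $\|z_h\|_{\mathcal{K}}^2 = \tau_{\mathcal{K}}^2\|\partial_t w_h\|_{\mathcal{K}}^2 \le h_K^2\, \tau_{\mathcal{K}}\|\partial_t w_h\|_{\mathcal{K}}^2$. For the facet jump, \cref{eq:st_trace} applied to $z_h$ followed by the second identity yields $\|z_h\|_{\partial\mathcal{K}}^2 \le 2 c_{T,\partial\mathcal{K}}^2\, h_K\, \tau_{\mathcal{K}}\|\partial_t w_h\|_{\mathcal{K}}^2$. For the diffusive gradient term, \cref{eq:s_inv} gives $\|\overline{\nabla}_h z_h\|_{\mathcal{K}}^2 \le c_{I,s}^2 h_K^{-2}\|z_h\|_{\mathcal{K}}^2 \le c_{I,s}^2\, \tau_{\mathcal{K}}\|\partial_t w_h\|_{\mathcal{K}}^2$, and analogously \cref{eq:s_trace} controls $(\nu/h_K)\|z_h\|_{\mathcal{Q}}^2$ by $c_{T,\mathcal{Q}}^2\, \nu\, \tau_{\mathcal{K}}\|\partial_t w_h\|_{\mathcal{K}}^2$. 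The only mildly subtle step is the new time-derivative term: since $\tau_{\mathcal{K}}$ is a piecewise constant, $\partial_t z_h = \tau_{\mathcal{K}}\partial_t^2 w_h$ on $\mathcal{K}$, and then \cref{eq:t_inv} applied to $\partial_t w_h \in V_h^{(p_t,p_s)}$ together with the squared second identity gives $\tau_{\mathcal{K}}\|\partial_t z_h\|_{\mathcal{K}}^2 \le c_{I,t}^2\, h_K^2\, \tau_{\mathcal{K}}\|\partial_t w_h\|_{\mathcal{K}}^2$.

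Summing the five element-wise estimates and using a uniform bound $h_K \le h_{\max}$ yields the conclusion with $c_1$ depending on $h_{\max}$, $\nu$, $\|\bar\beta\|_{L^{\infty}(\mathcal{E})}$, and the constants of \cref{lem:invtrace}, but independent of any individual $h_K$ and $\Delta t$. No deep analytic difficulty arises; the proof is essentially an exercise in careful bookkeeping held together by the two identities above, which I expect to be the only ingredient that requires any thought.
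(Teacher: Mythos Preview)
Your argument is correct, and the two identities you isolate are indeed the heart of the matter. The route, however, differs from the paper's in one instructive respect. You funnel \emph{every} term of $\tnorm{(z_h,0)}_s^2$ into the single sink $\sum_{\mathcal{K}}\tau_{\mathcal{K}}\|\partial_t w_h\|_{\mathcal{K}}^2$, using \cref{eq:s_inv} and \cref{eq:s_trace} to peel off spatial derivatives and traces. The paper instead ``undoes'' the time derivative in several places via \cref{eq:t_inv}: it bounds $\|z_h\|_{\mathcal{K}}^2$ directly by $\|w_h\|_{\mathcal{K}}^2$, and---more notably---writes $\overline{\nabla}_h z_h = \tau_{\mathcal{K}}\,\partial_t(\overline{\nabla}_h w_h)$ and applies \cref{eq:t_inv} to $\overline{\nabla}_h w_h$, so that $\nu\|\overline{\nabla}_h z_h\|_{\mathcal{K}}^2$ is controlled by $\nu\|\overline{\nabla}_h w_h\|_{\mathcal{K}}^2$ rather than by $\nu\,\tau_{\mathcal{K}}\|\partial_t w_h\|_{\mathcal{K}}^2$. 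The practical payoff of the paper's distribution is that the gradient term lands on the matching term of $\tnorm{(w_h,\lambda_h)}_s$ without producing a stray factor of $\nu$; your approach is tidier algebraically but builds $\nu$ (and $h_{\max}$) into $c_1$, which you correctly flag. Both proofs ultimately need an upper bound on $h_K$---the paper invokes $h_K<1$ where you write $h_{\max}$---so neither constant is truly scale-free, and for the purposes of \cref{thm:infsupcondition} the difference is immaterial.
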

\begin{proof}
  We bound each component of $\tnorm{ (z_h, 0) }_s$ term-by-term.
  Using the inverse inequality \cref{eq:t_inv} and that $h_K < 1$, we
  have
  \begin{equation*}
    \label{eq:tnorm_zh_1}
    \norm{z_h}_{\mathcal{E}}^2 = \sum_{\mathcal{K}\in\mathcal{T}_h} 
    \del{ \frac{\Delta t h_K^2}{\Delta t + h_K} }^2
    \norm{ \partial_t w_h}_{\mathcal{K}}^2  
    \le  c_{I,t}^2 \norm{w_h}_{\mathcal{E}}^2.
  \end{equation*}
  Similarly, the inverse inequality~\cref{eq:t_inv} and $h_K < 1$
  yields
  \begin{equation*}
    \sum_{\mathcal{K}\in\mathcal{T}_h} \nu \norm{\overline{\nabla}_h z_h}^2_{\mathcal{K}} =
    \sum_{\mathcal{K}\in\mathcal{T}_h} \nu \del{ \frac{\Delta t h_K^2}{\Delta t + h_K} }^2
     \norm{\partial_t(\overline{\nabla}_h w_h)}^2_{\mathcal{K}} 
    \le c_{I,t}^2\sum_{\mathcal{K}\in\mathcal{T}_h} \nu \norm{\overline{\nabla}_h w_h}^2_{\mathcal{K}}.    
  \end{equation*}
  Next, the facet term arising from the advective portion of the norm
  may be bounded using the trace inequality \cref{eq:st_trace}:
  \begin{align*}
    \label{eq:tnorm_zh_2}
    \sum_{\mathcal{K}\in\mathcal{T}_h}\norm{ \beta_n^{1/2}z_h}^2_{\partial \mathcal{K}}
    & \le \norm{\beta}_{L^\infty(\mathcal{E})}
      \sum_{\mathcal{K}\in\mathcal{T}_h}
      \norm{z_h}^2_{\partial \mathcal{K}} \\
    & \le c_{T,\partial \mathcal{K}}^2 \norm{\beta}_{L^\infty(\mathcal{E})}
      \sum_{\mathcal{K}\in\mathcal{T}_h} 
      \del{ \frac{\Delta t h_K^2}{\Delta t + h_K} } \norm{\partial_t w_h}^2_{\mathcal{K}}.
  \end{align*}
  The facet term diffusive portion of the norm may be bounded with an
  application of~\cref{eq:s_trace} and~\cref{eq:t_inv}:
  \begin{equation*}
    \label{eq:tnorm_zh_4}
    \sum_{\mathcal{K}\in\mathcal{T}_h} \frac{\nu}{h_K}\norm{z_h}^2_{\mathcal{Q}} 
    = 
    \sum_{\mathcal{K}\in\mathcal{T}_h} \frac{\nu}{h_K}\del{ \frac{\Delta t h_K^2}{\Delta t + h_K} }^2 \norm{\partial_tw_h}^2_{\mathcal{Q}}
    \le 
    c_{T,\mathcal{Q}}^2c_{I,t}^2\sum_{\mathcal{K}\in\mathcal{T}_h} \nu\norm{w_h}^2_{\mathcal{K}}.
  \end{equation*}
  For the remaining term, \cref{eq:t_inv} yields
  \begin{equation*}
    \label{eq:tnorm_zh_5}
    \sum_{\mathcal{K}\in\mathcal{T}_h}\frac{\Delta t h_K^2}{\Delta t + h_K}\norm{\partial_t z_h}^2_{\mathcal{K}}
    \le c_{I,t}^2 \sum_{\mathcal{K}\in\mathcal{T}_h}  \del{\frac{\Delta t h_K^2}{\Delta t + h_K}\norm{ \partial_t w_h}^2_{\mathcal{K}}}.
  \end{equation*}
  Collecting the above bounds, we obtain \cref{eq:tnormbound}, with
  $c_1 = 3c_{I,t}^2 + c_{T,\partial
    \mathcal{K}}^2\norm{\beta}_{L^{\infty}(\mathcal{E})} +
  c_{T,\mathcal{Q}}^2c_{I,t}^2$.
\end{proof}
\begin{lemma}
  \label{lem:infsupcondition_pre2}
  Let $(w_h,\lambda_h) \in V_h^{\star}$ and let
  $z_h = \frac{\Delta t h_K^2}{\Delta t + h_K} \partial_t w_h$. There
  exists a $c_2 > 0$, independent of $h_K$ and $\Delta t$, such that
  if $(v_h, \mu_h) = c_2(w_h, \lambda_h) + (z_h, 0) \in V_h^{\star}$,
  then
  \begin{equation}
    \label{eq:stability_in_vw}
    \frac{1}{2}\tnorm{ (w_h, \lambda_h) }_s^2 \le a_h((w_h, \lambda_h), (v_h, \mu_h)).
  \end{equation}
\end{lemma}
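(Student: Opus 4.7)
The plan is to test the bilinear form against a combination of $(w_h,\lambda_h)$ itself and the weighted time-derivative $(z_h,0)$, exploiting coercivity in the $\tnorm{\cdot}_v$-norm to control the ``spatial'' part of $\tnorm{\cdot}_s$ while extracting the missing time-derivative term from the advective bilinear form. Concretely, I will split
\begin{equation*}
a_h((w_h,\lambda_h),(v_h,\mu_h)) = c_2\, a_h((w_h,\lambda_h),(w_h,\lambda_h)) + a_h((w_h,\lambda_h),(z_h,0)),
\end{equation*}
and apply \cref{lem:stability} to the first term to produce $c_2 c_c \tnorm{(w_h,\lambda_h)}_v^2$. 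The role of the second term is to produce the weighted $\norm{\partial_t w_h}_{\mathcal{K}}^2$ contribution that distinguishes $\tnorm{\cdot}_s$ from $\tnorm{\cdot}_v$.

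The key computation is on the advective part $a_h^a((w_h,\lambda_h),(z_h,0))$. Since the test variable on facets is zero, I would integrate by parts elementwise in the volume term $-\int_{\mathcal{K}}\beta w_h\cdot\nabla_h z_h\,dx$ to rewrite it as $\int_{\mathcal{K}} z_h(\nabla_h\cdot\beta)w_h\,dx + \int_{\mathcal{K}} z_h\,\partial_t w_h\,dx + \int_{\mathcal{K}} z_h\,\bar\beta\cdot\overline{\nabla}_h w_h\,dx - \int_{\partial\mathcal{K}}\beta\cdot n\,w_h z_h\,ds$. Combining the boundary contribution with the facet integrals already present in $a_h^a$ collapses them into an inflow-type term $\int_{\partial\mathcal{K}}\tfrac12(|\beta\cdot n|-\beta\cdot n)(w_h-\lambda_h)z_h\,ds$. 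The crucial identity is then
\begin{equation*}
\int_{\mathcal{K}} z_h\,\partial_t w_h\,dx = \frac{\Delta t\, h_K^2}{\Delta t + h_K}\norm{\partial_t w_h}_{\mathcal{K}}^2,
\end{equation*}
which delivers the missing $T^2 := \sum_{\mathcal{K}} \tfrac{\Delta t h_K^2}{\Delta t+h_K}\norm{\partial_t w_h}_{\mathcal{K}}^2$ exactly.

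All remaining contributions from $a_h^a((w_h,\lambda_h),(z_h,0))$ and the whole of $a_h^d((w_h,\lambda_h),(z_h,0))$ would be bounded by $C\,\tnorm{(w_h,\lambda_h)}_v\,\tnorm{(z_h,0)}_v$ via Cauchy--Schwarz on each term, mimicking the argument of \cref{lem:bndness} but using the discrete bound \cref{eq:boundedness_ahd_Vstar} for the diffusive piece. Then \cref{lem:infsupcondition_pre1} gives $\tnorm{(z_h,0)}_v\le\tnorm{(z_h,0)}_s\le c_1\tnorm{(w_h,\lambda_h)}_s \le c_1(\tnorm{(w_h,\lambda_h)}_v + T)$. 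Applying Young's inequality $C c_1\tnorm{(w_h,\lambda_h)}_v\,T\le\tfrac12 T^2 + \tfrac{(Cc_1)^2}{2}\tnorm{(w_h,\lambda_h)}_v^2$ absorbs half of $T^2$ into the left-hand side.

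Assembling these pieces yields $a_h((w_h,\lambda_h),(v_h,\mu_h)) \ge (c_2 c_c - C')\tnorm{(w_h,\lambda_h)}_v^2 + \tfrac12 T^2$ for an explicit $C'>0$ independent of $h_K,\Delta t$. Choosing $c_2$ large enough so that $c_2 c_c - C'\ge \tfrac12$ gives \cref{eq:stability_in_vw}. The main obstacle, in my view, is the careful bookkeeping in the integration by parts on $a_h^a$: one must combine the volume and facet boundary terms correctly so that exactly $\int z_h\,\partial_t w_h\,dx$ emerges and all leftovers are genuinely controlled by $\tnorm{(w_h,\lambda_h)}_v\tnorm{(z_h,0)}_v$ rather than by stronger norms that would prevent the final Young's inequality argument from closing.
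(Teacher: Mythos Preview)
Your proposal is correct and follows essentially the same route as the paper: integrate by parts in $a_h^a((w_h,\lambda_h),(z_h,0))$ to isolate $\sum_{\mathcal{K}}\frac{\Delta t h_K^2}{\Delta t+h_K}\norm{\partial_t w_h}_{\mathcal{K}}^2$, bound the diffusive part via \cref{eq:boundedness_ahd_Vstar}, control the advective leftovers with Cauchy--Schwarz and \cref{lem:infsupcondition_pre1}, absorb half of the time-derivative term by Young's inequality, and finally invoke \cref{lem:stability} with $c_2$ large enough. The only cosmetic difference is that the paper estimates the three advective remainder terms individually using the inverse and trace inequalities \cref{eq:t_inv}, \cref{eq:s_inv}, \cref{eq:st_trace} with separate Young parameters $\epsilon_1,\epsilon_2,\epsilon_3$, whereas you bundle them into a single product $C\tnorm{(w_h,\lambda_h)}_v\tnorm{(z_h,0)}_v$ and let \cref{lem:infsupcondition_pre1} do the work; both close the same way.
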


\begin{proof}
  Note that
  $a_h((w_h, \lambda_h), (z_h, 0))= a_h^a((w_h, \lambda_h), (z_h, 0))+
  a_h^d((w_h, \lambda_h), (z_h, 0))$.
  Integrating by parts the volume integral of $a_h^a(\cdot,\cdot)$ we
  have the following decomposition:
  \begin{multline} 
    \label{eq:dt_term}
    \sum_{\mathcal{K}\in\mathcal{T}_h}\frac{\Delta t h_K^2}{\Delta t + h_K} \norm{\partial_t w_h}^2_{\mathcal{K}}
    = a_h((w_h, \lambda_h), (z_h, 0)) - a_h^d((w_h, \lambda_h), (z_h, 0)) 
    \\
    - \sum_{\mathcal{K} \in \mathcal{T}_h} \frac{\Delta t h_K^2}{\Delta t+h_K} \int_{\mathcal{K}}
    w_h \overline{\nabla}_h \cdot \bar{\beta} \partial_t w_h \dif x 
    - \sum_{\mathcal{K} \in \mathcal{T}_h} \frac{\Delta t h_K^2}{\Delta t+h_K} \int_{\mathcal{K}}
     \bar{\beta} \cdot \overline{\nabla}_h w_h \partial_t w_h \dif x 
    \\
    + \frac{1}{2}\sum_{\mathcal{K} \in \mathcal{T}_h} 
    \frac{\Delta t h_K^2}{\Delta t + h_K} \int_{\partial \mathcal{K}}
    \del{\beta \cdot n - |\beta \cdot n|}\del{w_h - \lambda_h} \partial_t w_h \dif s.
  \end{multline} 
  From the boundedness of the diffusive part of the bilinear
  form~\cref{eq:boundedness_ahd_Vstar}, and application of Young's
  inequality, with $\epsilon_1 > 0$, we obtain the following bound for
  the second term on the right hand side of \cref{eq:dt_term}:
  \begin{align*}
    | a_h^d((w_h, \lambda_h), (z_h, 0)) | & \le \frac{c_d}{2\epsilon_1} \tnorm{ (z_h,0) }_v^2 + \frac{c_d \epsilon_1}{2} \tnorm{ (w_h, \lambda_h) }_v^2 
    \\
                                     &\le \frac{c_dc_1^2}{2\epsilon_1} \tnorm{ (w_h, \lambda_h) }_s^2 + \frac{c_d \epsilon_1}{2} \tnorm{ (w_h, \lambda_h) }_v^2 \\ 
                                    & \le  \frac{c_dc_1^2}{2\epsilon_1} \sum_{\mathcal{K}\in\mathcal{T}_h}\frac{\Delta t h_K^2}{\Delta t + h_K} \norm{\partial_t w_h}^2_{\mathcal{K}} +
                                      \del{ \frac{c_dc_1^2}{2\epsilon_1} + \frac{c_d \epsilon_1}{2} }\tnorm{ (w_h, \lambda_h) }_v^2,
  \end{align*}
  where we have used the fact that
  $\tnorm{\cdot}_v \le \tnorm{\cdot}_s$ and applied
  \cref{lem:infsupcondition_pre1} in the second inequality, and the
  definition of $\tnorm{\cdot}_s$ in the third inequality.  Next, to
  bound the third term on the right hand side of \cref{eq:dt_term} we
  apply the Cauchy--Schwarz inequality and~\cref{eq:t_inv} to obtain
  \begin{equation*}
    \bigg| \sum_{\mathcal{K} \in \mathcal{T}_h} \frac{\Delta t h_K^2}{\Delta t+h_K} \int_{\mathcal{K}}
    w_h \overline{\nabla}_h \cdot \bar{\beta} \partial_t w_h \dif x \bigg|
    \le c_{I,t} \norm{\overline{\nabla}_h \cdot \bar{\beta}}_{L^{\infty}(\mathcal{E})} 
    \sum_{\mathcal{K} \in \mathcal{T}_h} h_K \norm{w_h}^2_{\mathcal{K}}.
  \end{equation*}
  As for the fourth term on the right hand side of \cref{eq:dt_term},
  we first apply the Cauchy--Schwarz inequality, Young's inequality
  with some $\epsilon_2 > 0$ and~\cref{eq:s_inv},
  \begin{multline*}
    \bigg| \sum_{\mathcal{K}\in\mathcal{T}_h}\frac{\Delta t h_K^2}{\Delta t + h_K}
    \int_{\mathcal{K}} \bar{\beta} \cdot \overline{\nabla}_h w_h \partial_t w_h\dif x \bigg| 
    \le \sum_{\mathcal{K}\in\mathcal{T}_h}\frac{\Delta t h_K^2}{\Delta t + h_K} \norm{\bar{\beta}}_{L^{\infty}(\mathcal{K})} 
      \norm{\partial_t w_h}_{\mathcal{K}} \norm{\overline{\nabla}_h w_h}_{\mathcal{K}} \\
    \le \norm{\bar{\beta}}_{L^{\infty}(\mathcal{E})}\sbr{
      \sum_{\mathcal{K}\in\mathcal{T}_h}\frac{c_{I,s}^2}{2\epsilon_2}\norm{w_h}_{\mathcal{K}}^2 
      + \sum_{\mathcal{K}\in\mathcal{T}_h} \frac{\epsilon_2}{2}\del{\frac{\Delta t h_K^2}{\Delta t + h_K}}
      \norm{\partial_t w_h}^2_{\mathcal{K}}}.
  \end{multline*}
  For the remaining term on the right hand side of \cref{eq:dt_term},
  we use the Cauchy--Schwarz inequality, Young's inequality with some
  $\epsilon_3 > 0$, and apply the trace inequality~\cref{eq:st_trace}
  to find:
  \begin{multline}
    \bigg| \sum_{\mathcal{K}\in\mathcal{T}_h}\frac{\Delta t h_K^2}{\Delta t + h_K} 
    \int_{\partial\mathcal{K}} \frac{1}{2}\del{ \beta \cdot n 
      - |\beta \cdot n |}\partial_t w_h \del{w_h - \lambda_h} \dif s \bigg| 
    \\
    \le
    \frac{c_{T,\partial \mathcal{K}}^2}{2\epsilon_3}\sum_{\mathcal{K}\in\mathcal{T}_h}  
    h_K\frac{\Delta t h_K^2}{\Delta t + h_K} \norm{\partial_t w_h}_{\mathcal{K}}^2+
    \frac{\epsilon_3}{2}\norm{\beta}_{L^{\infty}(\mathcal{E})}
    \sum_{\mathcal{K}\in\mathcal{T}_h} \norm{\beta_n^{1/2} (w_h-\lambda_h)}_{\partial \mathcal{K}}^2.
  \end{multline}
  Combining all of the above estimates,
  \begin{multline}
    \sum_{\mathcal{K}\in\mathcal{T}_h} \frac{\Delta t h_K^2}{\Delta t + h_K} \norm{\partial_t w_h}^2_{\mathcal{K}}
    \le a_h((w_h, \lambda_h), (z_h,0) ) \\
    + 
    \del{\frac{c_dc_1^2}{2\epsilon_1} + \frac{\epsilon_2}{2}\norm{\bar{\beta}}_{L^{\infty}(\mathcal{E})} 
      + \frac{c_{T,\partial\mathcal{K}}^2}{2\epsilon_3}} \sum_{\mathcal{K}\in\mathcal{T}_h}\frac{\Delta t h_K^2}{\Delta t + h_K} 
    \norm{\partial_t w_h}^2_{\mathcal{K}} 
    \\
    + \bigg(\frac{c_dc_1^2}{2\epsilon_1} + \frac{c_d \epsilon_1}{2}
    + c_{I,t} \norm{\overline{\nabla}_h \cdot \bar{\beta}}_{L^{\infty}(\mathcal{E})}
      +\frac{c_{I,s}^2}{2\epsilon_2}\norm{\bar{\beta}}_{L^{\infty}(\mathcal{E})}
      \\
      +
      \frac{\epsilon_3}{2}\norm{\beta}_{L^{\infty}(\mathcal{E})}     
    \bigg) 
    \tnorm{ (w_h, \lambda_h) }_v^2.
  \end{multline}
  Choosing $\epsilon_1 = 2c_dc_1^2$,
  $\epsilon_2 =
  1/(4\norm{\bar{\beta}}_{L^{\infty}(\mathcal{E})})$,
  and $\epsilon_3 = 4c_{T,\partial \mathcal{K}}^2$, adding
  $\frac{1}{2}\tnorm{ (w_h, \lambda_h) }_v^2$ to both sides, and
  rearranging yields
  \begin{multline}
    \tfrac{1}{2}\tnorm{ (w_h, \lambda_h) }_s^2
    \le a_h((w_h, \lambda_h), (z_h, 0))
    \\
    + \bigg(\tfrac{3}{4} + c_d^2c_1^2
      + c_{I,t} \norm{\overline{\nabla}_h \cdot \bar{\beta}}_{L^{\infty}(\mathcal{E})}
      + 2c_{I,s}^2\norm{\bar{\beta}}_{L^{\infty}(\mathcal{E})}^2
      + 2c_{T,\partial \mathcal{K}}^2 \norm{\beta}_{L^{\infty}(\mathcal{E})}
      \bigg)\tnorm{ (w_h, \lambda_h) }_v^2.
  \end{multline}
  From the stability of $a_h(\cdot, \cdot)$, \cref{lem:stability}, we
  have the bound
  \begin{equation}
    \tfrac{1}{2} \tnorm{ (w_h, \lambda_h) }_s^2
    \le a_h((w_h, \lambda_h), (z_h, 0)) + c_2 a_h((w_h, \lambda_h), (w_h, \lambda_h)) 
  \end{equation}
  where
  $c_2 = c_c^{-1}(\tfrac{3}{4} + c_d^2c_1^2 + c_{I,t}
  \norm{\overline{\nabla}_h \cdot
    \bar{\beta}}_{L^{\infty}(\mathcal{E})} +
  2c_{I,s}^2\norm{\bar{\beta}}_{L^{\infty}(\mathcal{E})}^2 +
  2c_{T,\partial \mathcal{K}}^2
  \norm{\beta}_{L^{\infty}(\mathcal{E})})$. The result follows.
\end{proof}

Combining \cref{lem:infsupcondition_pre1} and
\cref{lem:infsupcondition_pre2} now yields the proof for the inf-sup
condition stated in~\cref{thm:infsupcondition}.
\begin{proof}[Proof of~\cref{thm:infsupcondition}]
  Given any $(w_h, \lambda_h) \in V_h^{\star}$, consider the linear
  combination $(v_h, \mu_h) = c_2(w_h, \lambda_h) + (z_h, 0)$, with
  $z_h = \frac{\Delta t h_K^2}{\Delta t + h_K} \partial_t w_h$ and
  $c_2$ the constant from~\cref{lem:infsupcondition_pre2}. An
  application of the triangle inequality and the combination
  of~\cref{lem:infsupcondition_pre1}
  and~\cref{lem:infsupcondition_pre2} yields
  \begin{align*}
    \label{eq:towardsinfsup}
    \tnorm{ (v_h, \mu_h) }_s \tnorm{ (w_h, \lambda_h) }_s 
    &\le \tnorm{ (z_h, 0) }_s \tnorm{ (w_h, \lambda_h) }_s + c_2\tnorm{ (w_h, \lambda_h) }_s^2\\
    &\le (c_1 + c_2)\tnorm{ (w_h, \lambda_h) }_s^2 \\
    &\le 2(c_1 + c_2)a_h((w_h, \lambda_h), (v_h, \mu_h)),
  \end{align*}
  which implies the inf-sup condition with
  $c_i = \frac{1}{2}(c_1 + c_2)^{-1}$.
\end{proof}

%-----------------------------------------------------------------------------
\section{Error analysis}
\label{s:erroranalysis}

We now turn to the error analysis of the space--time HDG method. The
following C\'{e}a-like lemma will prove useful in obtaining the global
error estimate in~\cref{lem:globErrorEstimate}.

\begin{lemma}[Convergence] 
  \label{lem:convergence} 
  If
  $\boldsymbol{u} = (u,\gamma(u)) \in H^2(\mathcal{E})\times
  H^{3/2}(\Gamma)$,
  where $u$ solves~\cref{eq:advdif_tdepdomain}, and
  $(u_h, \lambda_h) \in V_h^{\star}$ is the solution to
  the discrete problem~\cref{eq:compactwf}, then
  \begin{equation}
    \tnorm{ \boldsymbol{u} - (u_h, \lambda_h) }_{s} \le \del{ 1 + \frac{c_B}{c_i} }
    \inf_{(v_h, \mu_h) \in V^{\star}_h} \tnorm{ \boldsymbol{u} - (v_h, \mu_h) }_{s,\star}\,.
  \end{equation}
\end{lemma}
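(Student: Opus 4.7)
The plan is to follow the standard Céa-type argument adapted to the non-symmetric, non-coercive setting, exploiting the three ingredients that have been prepared in the preceding sections: boundedness (\cref{lem:bndness}), the inf-sup condition (\cref{thm:infsupcondition}), and Galerkin orthogonality \cref{eq:gal_orth}. The only subtlety worth flagging in advance is the asymmetry between the $\tnorm{\cdot}_s$ and $\tnorm{\cdot}_{s,\star}$ norms: boundedness requires the stronger norm on its first argument, which is exactly why the final bound is in terms of $\tnorm{\cdot}_{s,\star}$ on the right-hand side.

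I would begin by fixing an arbitrary $(v_h,\mu_h) \in V_h^{\star}$ and splitting
\begin{equation*}
\boldsymbol{u} - (u_h,\lambda_h) = \bigl(\boldsymbol{u} - (v_h,\mu_h)\bigr) + \bigl((v_h,\mu_h) - (u_h,\lambda_h)\bigr),
\end{equation*}
so that the triangle inequality gives
\begin{equation*}
\tnorm{\boldsymbol{u} - (u_h,\lambda_h)}_s \le \tnorm{\boldsymbol{u} - (v_h,\mu_h)}_s + \tnorm{(v_h,\mu_h) - (u_h,\lambda_h)}_s.
\end{equation*}
The first term is already in the desired form, because from the definition of $\tnorm{\cdot}_{s,\star}$ in \cref{eq:bndnorm} (which adds only non-negative contributions to $\tnorm{\cdot}_{s}^{2}$) we have $\tnorm{\cdot}_s \le \tnorm{\cdot}_{s,\star}$.

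The core of the argument is to control the discrete piece $(v_h,\mu_h) - (u_h,\lambda_h) \in V_h^{\star}$. Applying the inf-sup condition \cref{eq:infsupcondition} to this element yields
\begin{equation*}
c_i \tnorm{(v_h,\mu_h) - (u_h,\lambda_h)}_s \le \sup_{(w_h,\eta_h) \in V_h^{\star}} \frac{a_h\bigl((v_h,\mu_h) - (u_h,\lambda_h),\,(w_h,\eta_h)\bigr)}{\tnorm{(w_h,\eta_h)}_s}.
\end{equation*}
Galerkin orthogonality \cref{eq:gal_orth} allows me to rewrite the numerator as $a_h\bigl((v_h,\mu_h) - \boldsymbol{u},\,(w_h,\eta_h)\bigr)$, since $\boldsymbol{u} - (v_h,\mu_h) \in V^{\star}(h)$ and $(w_h,\eta_h) \in V_h^{\star}$. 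Then boundedness (\cref{lem:bndness}) gives
\begin{equation*}
\bigl|a_h\bigl((v_h,\mu_h) - \boldsymbol{u},\,(w_h,\eta_h)\bigr)\bigr| \le c_B \tnorm{\boldsymbol{u} - (v_h,\mu_h)}_{s,\star} \tnorm{(w_h,\eta_h)}_s,
\end{equation*}
so that
\begin{equation*}
\tnorm{(v_h,\mu_h) - (u_h,\lambda_h)}_s \le \frac{c_B}{c_i} \tnorm{\boldsymbol{u} - (v_h,\mu_h)}_{s,\star}.
\end{equation*}

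Finally, combining the two bounds,
\begin{equation*}
\tnorm{\boldsymbol{u} - (u_h,\lambda_h)}_s \le \tnorm{\boldsymbol{u} - (v_h,\mu_h)}_{s,\star} + \frac{c_B}{c_i}\tnorm{\boldsymbol{u} - (v_h,\mu_h)}_{s,\star} = \Bigl(1 + \frac{c_B}{c_i}\Bigr)\tnorm{\boldsymbol{u} - (v_h,\mu_h)}_{s,\star},
\end{equation*}
and since $(v_h,\mu_h) \in V_h^{\star}$ was arbitrary, passing to the infimum on the right-hand side completes the proof. There is no genuine obstacle here; the work was already done in proving boundedness in the $(s,\star)$/$s$ asymmetric form and in establishing the inf-sup condition in the $s$-norm, so the Céa estimate falls out in essentially three lines.
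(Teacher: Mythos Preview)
Your proof is correct and follows essentially the same C\'ea-type argument as the paper: split via the triangle inequality, use the inf-sup condition on the discrete piece, replace $(u_h,\lambda_h)$ by $\boldsymbol{u}$ via Galerkin orthogonality, and bound with \cref{lem:bndness}. The only cosmetic difference is that you apply the triangle inequality first and then bound the discrete part, whereas the paper does these in the opposite order; you also make explicit the step $\tnorm{\cdot}_s \le \tnorm{\cdot}_{s,\star}$, which the paper leaves implicit.
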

\begin{proof}
  From inf-sup stability (\cref{thm:infsupcondition}), Galerkin
  orthogonality \cref{eq:gal_orth}, and boundedness
  (\cref{lem:bndness}), we have for any
  $(w_h, \omega_h) \in V_h^{\star}$
  \begin{align*}
    c_i \tnorm{ (u_h, \lambda_h) - (w_h, \omega_h) }_{s} 
    & \le \sup_{(v_h, \mu_h) \in V^{\star}_h} \frac{ a_h((u_h, \lambda_h) - (w_h, \omega_h), (v_h, \mu_h)) }{\tnorm{ (v_h, \mu_h) }_s}
    \\
    & = \sup_{(v_h, \mu_h) \in V^{\star}_h} \frac{ a_h(\boldsymbol{u} - (w_h, \omega_h), (v_h, \mu_h)) }{\tnorm{ (v_h, \mu_h) }_s} 
    \\ 
    & \le c_B \sup_{(v_h, \mu_h) \in V^{\star}_h} \frac{ \tnorm{\boldsymbol{u} - (w_h, \omega_h)}_{s,\star} \tnorm{ (v_h, \mu_h) }_s }{\tnorm{ (v_h, \mu_h) }_s} \\
    & = c_B \tnorm{\boldsymbol{u} - (w_h, \omega_h)}_{s,\star}.
  \end{align*}
  The result follows after application of the triangle inequality to
  $\tnorm{ \boldsymbol{u} - (u_h, \lambda_h) }_{s}$.
\end{proof}

We next define the projections
$\mathcal{P}: L^2(\mathcal{E}) \to V^{(p_t,p_s)}_h$ and
$\mathcal{P}^{\partial}: L^2(\Gamma) \to M^{(p_t,p_s)}_h$ which
satisfy
\begin{align} 
  & \sum_{\mathcal{K} \in \mathcal{T}_h} \int_{\mathcal{K}} \del{ w-\mathcal{P}w } v_h \dif x = 0, 
    \qquad \forall \; v_h \in V^{(p_t,p_s)}_h, 
  \\
  & \sum_{\mathcal{S} \in \mathcal{F}} \int_{\mathcal{S}} \del{ \lambda-\mathcal{P}^{\partial}\lambda } 
    \mu_h \dif s = 0, \qquad \forall \; \mu_h \in M^{(p_t,p_s)}_h.
\end{align}
These projections will be used to obtain interpolation estimates.

\begin{lemma}[Interpolation estimates]
  \label{lem:interpolation}
  Assume that $\mathcal{K}$ is a space--time element in
  $\mathbb{R}^{d+1}$ constructed via two mappings $\phi_{\mathcal{K}}$
  and $F_{\mathcal{K}}$, with
  $F_{\mathcal{K}} : \widehat{\mathcal{K}} \to
  \widetilde{\mathcal{K}}$
  and $\phi_{\mathcal{K}}: \widetilde{\mathcal{K}} \to \mathcal{K}$.
  Assume that the spatial shape-regularity condition
  \cref{eq:shape_reg} holds. Suppose
  $u|_{\mathcal{K}} \in H^{(p_t+1,p_s+1)}(\mathcal{K})$
  solves~\cref{eq:modprob}--\cref{eq:boundaryinitialconditions}. Then,
  the error $u-\mathcal{P}u$, its trace at the boundary
  $\partial \mathcal{K}$, and the error $u-\mathcal{P}^{\partial}u$ on
  $\partial \mathcal{K}$ satisfy the following error bounds:
  \begin{align}
    \label{eq:interp_u}
    &\norm{u - \mathcal{P}u}_{\mathcal{K}}^2 
      \le c \del{ h_K^{2p_s + 2} + \Delta t^{2p_t+2} } \norm{ u }^2_{H^{(p_t + 1, p_s + 1)}(\mathcal{K})},
    \\
    \label{eq:interp_grad}
    &\norm{\overline{\nabla}_h(u - \mathcal{P}u)}_{\mathcal{K}}^2
      \le c  \del{ h_K^{2p_s} + \Delta t^{2p_t+2} } \norm{u}^2_{H^{(p_t + 1, p_s + 1)}(\mathcal{K})} ,
    \\
    \label{eq:interp_dt}
    &\norm{\partial_{t} (u - \mathcal{P}u)}_{\mathcal{K}}^2
      \le c  \del{ h_K^{2p_s} + \Delta t^{2p_t} } \norm{u}^2_{H^{(p_t + 1, p_s + 1)}(\mathcal{K})},       
    \\
    \label{eq:interp_grad_trace}
    & \norm{\overline{\nabla}_h(u - \mathcal{P}u)\cdot \bar{n}}_{\mathcal{Q}}^2 
      \le c  \del{ h_K^{2p_s-1} + h_K^{-1} \Delta t^{2p_t+2} } \norm{u}^2_{H^{(p_t + 1, p_s + 1)}(\mathcal{K})}, 
    \\
    \label{eq:interp_trace}
    &\norm{u - \mathcal{P}u}_{\partial \mathcal{K}}^2 
      \le c  \del{ h_K^{2p_s + 1} + \Delta t^{2p_t+1} } \norm{u}^2_{H^{(p_t + 1, p_s + 1)}(\mathcal{K})}, 
    \\
    \label{eq:interp_facet}
    & \norm{u - \mathcal{P}^{\partial}\gamma(u)}_{\partial \mathcal{K}}
       \le c \del{ h_K^{2p_s + 1} + \Delta t^{2p_t+1} } \norm{u}^2_{H^{(p_t + 1, p_s + 1)}(\mathcal{K})}, 
  \end{align}
  where $c$ depends only on the spatial dimension $d$, the polynomial
  degrees $p_t$ and $p_s$, the spatial shape-regularity constant
  $c_r$, and the Jacobian of the mapping $\phi_{\mathcal{K}}$.
\end{lemma}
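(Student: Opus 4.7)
The plan is to prove each estimate by pulling back through the chain of mappings $\phi_{\mathcal{K}}$ and $F_{\mathcal{K}}$ to the reference cube $\widehat{\mathcal{K}} = (-1,1)^{d+1}$, applying an anisotropic Bramble--Hilbert/Deny--Lions argument on the tensor-product element $\widetilde{\mathcal{K}}$, and then tracking the explicit powers of $h_K$ and $\Delta t$ that are produced by the affine scaling $F_{\mathcal{K}}$. Since all the projection, norm, and seminorm constructions are defined via composition with $\phi_{\mathcal{K}}$, it is natural to first establish the estimates on $\widetilde{\mathcal{K}}$, for the tensor-product $L^2$ projection $\widetilde{\mathcal{P}}$ onto $Q_{(p_t,p_s)}(\widetilde{\mathcal{K}})$, and then transfer to $\mathcal{K}$ at the end.

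First, I would derive the master $L^2$ estimate on $\widetilde{\mathcal{K}}$,
\begin{equation*}
\|\tilde{u} - \widetilde{\mathcal{P}}\tilde{u}\|_{\widetilde{\mathcal{K}}}^2
\;\lesssim\; \Delta t^{2p_t+2}\,|\tilde{u}|_{H^{(p_t+1,0)}(\widetilde{\mathcal{K}})}^2
+ h_K^{2p_s+2}\,|\tilde{u}|_{H^{(0,p_s+1)}(\widetilde{\mathcal{K}})}^2,
\end{equation*}
by writing $\widetilde{\mathcal{P}} = \widetilde{\Pi}_t\otimes\widetilde{\Pi}_s$ as a tensor product of a temporal and a spatial $L^2$ projection, applying a one-dimensional polynomial approximation result in each variable, and using the spatial shape-regularity condition \cref{eq:shape_reg} to identify $h_i \sim h_K$. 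Analogous anisotropic estimates for $\overline{\nabla}_h(\tilde{u}-\widetilde{\mathcal{P}}\tilde{u})$ and $\partial_t(\tilde{u}-\widetilde{\mathcal{P}}\tilde{u})$ follow along the same lines, using that $\overline{\nabla}_h$ and $\partial_t$ interact with $\widetilde{\Pi}_s$ and $\widetilde{\Pi}_t$ respectively through one-dimensional approximation in the remaining variable; this is what allows \cref{eq:interp_grad} and \cref{eq:interp_dt} to lose only one power of $h_K$ (or $\Delta t$) in the \emph{relevant} variable while keeping the optimal anisotropic rate in the other. Transferring from $\widetilde{\mathcal{K}}$ to $\mathcal{K}$ via $\phi_{\mathcal{K}}$ costs only multiplicative constants, because $|\det J_{\phi_{\mathcal{K}}}|$ is bounded from above and below and the minors of $J_{\phi_{\mathcal{K}}}$ are in $L^\infty$; together these bound the entries of $J_{\phi_{\mathcal{K}}}^{-1}$, so each partial derivative in $\mathcal{K}$ is controlled by the corresponding derivatives in $\widetilde{\mathcal{K}}$ up to a constant depending only on $C_1,C_2$. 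This produces \cref{eq:interp_u}, \cref{eq:interp_grad}, and \cref{eq:interp_dt}.

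For the facet estimates \cref{eq:interp_grad_trace}--\cref{eq:interp_trace}, I would combine the volume bounds already obtained with the scaled anisotropic trace inequality valid for arbitrary $v\in H^1(\mathcal{K})$,
\begin{equation*}
\|v\|_{\partial\mathcal{K}}^2 \lesssim \bigl(\Delta t^{-1}+h_K^{-1}\bigr)\|v\|_{\mathcal{K}}^2 + \bigl(\Delta t + h_K\bigr)\|\nabla_h v\|_{\mathcal{K}}^2,
\qquad
\|v\|_{\mathcal{Q}}^2 \lesssim h_K^{-1}\|v\|_{\mathcal{K}}^2 + h_K\|\overline{\nabla}_h v\|_{\mathcal{K}}^2,
\end{equation*}
which is the continuous analogue of \cref{eq:st_trace}--\cref{eq:s_trace} and follows by the same scaling argument from a trace inequality on $\widehat{\mathcal{K}}$. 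Applying the second inequality to $v=u-\mathcal{P}u$ and using \cref{eq:interp_u}, \cref{eq:interp_grad}, and \cref{eq:interp_dt} yields \cref{eq:interp_grad_trace} and \cref{eq:interp_trace}. The final estimate \cref{eq:interp_facet} for the facet projection $\mathcal{P}^\partial$ is obtained from the $L^2$-best-approximation property of $\mathcal{P}^\partial\gamma(u)$ on each facet $\mathcal{S}\subset\partial\mathcal{K}$, combined with a facet-wise anisotropic Bramble--Hilbert estimate: since a facet of a space--time element is itself either a spatial element (at $t=t_n$ or $t_{n+1}$) or a tensor-product element of the form ``spatial facet $\times$ $I_n$'', the same reference-to-physical scaling used above applies verbatim.

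The main obstacle is the bookkeeping in the apparently sharper bounds \cref{eq:interp_grad} and \cref{eq:interp_dt}. A naive gradient-of-$L^2$-estimate argument would produce $h_K^{-2}\Delta t^{2p_t+2}$ rather than $\Delta t^{2p_t+2}$ in \cref{eq:interp_grad}; to avoid this, one must exploit the tensor-product structure of $\widetilde{\mathcal{P}}$ so that $\overline{\nabla}_h$ and $\widetilde{\Pi}_t$ commute exactly, reducing the problem to a pure spatial projection error of $\overline{\nabla}_h\tilde{u}$ in the $h_K$-term and a pure temporal projection error of $\tilde{u}$ (not of its gradient) in the $\Delta t$-term. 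A secondary technical point is that the diffeomorphism $\phi_{\mathcal{K}}$ may mix spatial and temporal derivatives when transferring between $\mathcal{K}$ and $\widetilde{\mathcal{K}}$, but the hypothesis that the minors of $J_{\phi_{\mathcal{K}}}$ are in $L^\infty$ keeps these cross-contributions bounded, so the anisotropic rates on $\widetilde{\mathcal{K}}$ survive the pullback to $\mathcal{K}$ up to the constant $c$ stated in the lemma.
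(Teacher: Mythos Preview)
Your overall strategy (pull back through $\phi_{\mathcal{K}}\circ F_{\mathcal{K}}$, use the tensor-product structure $\widetilde{\mathcal{P}}=\widetilde{\Pi}_t\otimes\widetilde{\Pi}_s$, invoke an anisotropic Bramble--Hilbert argument, and track the $h_K,\Delta t$ scaling) is exactly the machinery underlying the results of Georgoulis and Sudirham that the paper cites, and it correctly produces the volume estimates \cref{eq:interp_u}--\cref{eq:interp_dt}. You are also right that the key to the sharp $\Delta t^{2p_t+2}$ term in \cref{eq:interp_grad} is the commutation of $\overline{\nabla}_h$ with $\widetilde{\Pi}_t$.

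The gap is in your treatment of the boundary estimates \cref{eq:interp_grad_trace} and especially \cref{eq:interp_trace}. You propose to obtain them by combining the already-established volume bounds with a continuous multiplicative trace inequality. This does not deliver the stated anisotropic rates. Concretely, on a temporal face $K^n\subset\partial\mathcal{K}$ the relevant trace inequality scales only with $\Delta t$, so applying it to $v=u-\mathcal{P}u$ and inserting \cref{eq:interp_u} and \cref{eq:interp_dt} leaves a term of order $\Delta t^{-1}h_K^{2p_s+2}$; symmetrically, on a $\mathcal{Q}$-face one picks up $h_K^{-1}\Delta t^{2p_t+2}$. Neither cross term is dominated by $h_K^{2p_s+1}+\Delta t^{2p_t+1}$ when $h_K$ and $\Delta t$ vary independently, which is precisely the anisotropic regime the lemma targets. (There is also a bookkeeping slip: applying the $\mathcal{Q}$-trace inequality to $v=u-\mathcal{P}u$ cannot yield \cref{eq:interp_grad_trace}, which concerns $\overline{\nabla}_h(u-\mathcal{P}u)\cdot\bar{n}$, nor \cref{eq:interp_trace}, which lives on all of $\partial\mathcal{K}$.)

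The route taken in the paper, following Georgoulis, avoids this: rather than passing through a volume estimate, one bounds the projection error directly on each reference face and then scales face-by-face, so that on a face normal to $\hat{x}_i$ only the corresponding edge length enters the scaling of the face measure. This is what produces the clean $h_K^{2p_s+1}+\Delta t^{2p_t+1}$ in \cref{eq:interp_trace} and the $h_K^{2p_s-1}+h_K^{-1}\Delta t^{2p_t+2}$ in \cref{eq:interp_grad_trace}. For \cref{eq:interp_facet} the paper also takes a shorter path than your facet-wise Bramble--Hilbert: since the trace of $Q_{(p_t,p_s)}(\widehat{\mathcal{K}})$ on any face lies in the facet space, $L^2$-optimality of $\mathcal{P}^{\partial}$ gives $\|u-\mathcal{P}^{\partial}\gamma(u)\|_{\mathcal{S}}\le\|u-\mathcal{P}u\|_{\mathcal{S}}$, and \cref{eq:interp_facet} follows immediately from \cref{eq:interp_trace}.
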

\begin{proof}
  The bounds \cref{eq:interp_u}, \cref{eq:interp_grad} and
  \cref{eq:interp_trace} have been obtained previously in~\cite[Lemma
  6.1 and Remark 6.2]{Sudirham:2006} by generalizing~\cite[Lemmas 3.13
  and 3.17]{Georgoulis:2003} to higher dimensions.  We relax the
  assumption in~\cite[Remark 6.2]{Sudirham:2006} that all spatial edge
  lengths are equal through the spatial shape-regularity assumption
  \cref{eq:shape_reg}. In doing so, the bound \cref{eq:interp_dt} may
  be obtained in an identical fashion to~\cref{eq:interp_grad}. The
  bound \cref{eq:interp_grad_trace} is obtained as follows: we derive
  a bound for the spatial derivative of the interpolation error over
  each face $\partial \mathcal{K}_i$, where $i=1,\dots,d$,
  generalizing~\cite[Lemma 3.20]{Georgoulis:2003} to the space--time
  setting. Then, summing over the faces $i=1,\dots,d$ we obtain a
  bound of the spatial derivatives of the interpolation error over
  $\mathcal{Q}=\partial\mathcal{K}\backslash(K^n\cup K^{n+1})$, and
  sum over all of the spatial derivatives to obtain the
  result. Lastly, the bound \cref{eq:interp_facet} may be inferred
  from the bound \cref{eq:interp_trace} by the optimality of the
  $L^2$-projection $\mathcal{P}^{\partial}$ on facets.
\end{proof}

With the interpolation estimates in place, we can now derive an error
bound in the $\tnorm{\cdot}_{s}$ norm:

\begin{lemma}[Global error estimate]
  \label{lem:globErrorEstimate}
  Suppose that $\mathcal{K}$ is a space--time element in
  $\mathbb{R}^{d+1}$ constructed via two mappings $\phi_{\mathcal{K}}$
  and $F_{\mathcal{K}}$, with
  $F_{\mathcal{K}} : \widehat{\mathcal{K}} \to
  \widetilde{\mathcal{K}}$
  and $\phi_{\mathcal{K}}: \widetilde{\mathcal{K}} \to \mathcal{K}$,
  and that the spatial shape-regularity condition~\cref{eq:shape_reg}
  holds. Let $\boldsymbol{u} = (u, \gamma(u))$, where
  $u|_{\mathcal{K}} \in H^{(p_t + 1, p_s + 1)}(\mathcal{K})$ solves
  the advection--diffusion problem~\cref{eq:modprob}, and where
  $\gamma(u)$ denotes the trace of $u$ on $\partial\mathcal{K}$.
  Furthermore, let $(u_h, \lambda_h) \in V^{\star}_h$ be the solution
  to the discrete problem~\cref{eq:compactwf}. Then, the following
  error bound holds:
  \begin{multline}
    \tnorm{\boldsymbol{u}- (u_h, \lambda_h)}_{s}^2 \le \\
    C \del{ h^{2p_s} + \Delta t^{2p_t+1}
    + \nu \del{ h^{2p_s} + h^{-1} \Delta t^{2p_t+1}} }
  \norm{ u }^2_{H^{(p_t + 1, p_s + 1)}(\mathcal{E})},
  \end{multline}
  where $h=max_{\mathcal{K} \in \mathcal{T}_h}h_K$ is the spatial mesh
  size $\Delta t$ is the time-step and $C>0$ a constant.
\end{lemma}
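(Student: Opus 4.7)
My plan is to deduce this global error estimate directly from the Céa-type bound in \cref{lem:convergence} combined with the elementwise interpolation estimates in \cref{lem:interpolation}. The inf-sup machinery in \cref{thm:infsupcondition}, together with boundedness (\cref{lem:bndness}), has already absorbed the delicate analytical work; what remains is essentially careful bookkeeping of powers of $h_K$ and $\Delta t$.

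First I would choose the test pair $(v_h, \mu_h) = (\mathcal{P} u, \mathcal{P}^{\partial} \gamma(u))$ in \cref{lem:convergence}, reducing the task to bounding
\begin{equation*}
  \tnorm{\boldsymbol{u} - (\mathcal{P}u, \mathcal{P}^{\partial}\gamma(u)) }_{s,\star}^2
\end{equation*}
elementwise. I would then expand this norm according to its defining \cref{eq:bndnorm}, giving ten classes of terms (five from $\tnorm{\cdot}_v^2$, one time-derivative term from $\tnorm{\cdot}_s^2$, and the four additional terms that upgrade to $\tnorm{\cdot}_{s,\star}^2$). For each term I would apply the matching inequality from \cref{lem:interpolation}: \cref{eq:interp_u} for $\norm{u-\mathcal{P}u}_{\mathcal{K}}^2$, \cref{eq:interp_grad} for $\nu\norm{\overline{\nabla}_h(u-\mathcal{P}u)}_{\mathcal{K}}^2$, \cref{eq:interp_dt} for the $\partial_t$ term (scaled by $\tfrac{\Delta t h_K^2}{\Delta t + h_K}$), \cref{eq:interp_trace} together with \cref{eq:interp_facet} and the triangle inequality for the $\beta_n^{1/2}$-weighted boundary terms and the penalty term $\tfrac{\nu}{h_K}\norm{\cdot}_{\mathcal{Q}}^2$, and finally \cref{eq:interp_grad_trace} for the $h_K\nu\norm{\overline{\nabla}_h \cdot \bar n}_{\mathcal{Q}}^2$ term.

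The one place where some care is required is the weighted $L^2$ term $\frac{\Delta t + h_K}{\Delta t h_K^2}\norm{u-\mathcal{P}u}_{\mathcal{K}}^2$ appearing in $\tnorm{\cdot}_{s,\star}$; substituting \cref{eq:interp_u} yields a factor $(\Delta t + h_K)(h_K^{2p_s+2} + \Delta t^{2p_t+2})/(\Delta t h_K^2)$ which must be controlled by $h_K^{2p_s} + \Delta t^{2p_t+1}$ (possibly with an $h_K^{-1}\Delta t^{2p_t+1}$ absorbed into the $\nu$-weighted cluster). I would expand the product and match terms: $\tfrac{h_K}{\Delta t h_K^2}\cdot h_K^{2p_s+2} = h_K^{2p_s+1}/\Delta t$ and $\tfrac{\Delta t}{\Delta t h_K^2}\cdot \Delta t^{2p_t+2} = \Delta t^{2p_t+2}/h_K^2$ must be bounded under the (implicit) comparability assumption $\Delta t \sim h_K$ standard in space--time HDG error analysis, while the cross terms yield exactly $h_K^{2p_s}$ and $\Delta t^{2p_t+1}/h_K$ as desired. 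This is the step where I expect the algebraic juggling to be most tedious.

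Finally, after summing all contributions elementwise, I would collect like terms and factor out $\norm{u}^2_{H^{(p_t+1,p_s+1)}(\mathcal{K})}$, replace each $h_K$ by its maximum $h$, and sum over $\mathcal{K} \in \mathcal{T}_h$ using the additivity of the broken norm $\norm{\cdot}_{H^{(p_t+1,p_s+1)}(\mathcal{T}_h)}$. The $\nu$-free contributions collect into $h^{2p_s} + \Delta t^{2p_t+1}$, the $\nu$-weighted diffusive contributions collect into $\nu(h^{2p_s} + h^{-1}\Delta t^{2p_t+1})$, and the constants coming from \cref{lem:convergence}, \cref{lem:interpolation}, and $\norm{\bar\beta}_{L^\infty(\mathcal{E})}$ are absorbed into the final $C$, yielding the claimed bound.
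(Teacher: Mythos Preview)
Your proposal is correct and follows essentially the same route as the paper: apply \cref{lem:convergence} with the choice $(v_h,\mu_h)=(\mathcal{P}u,\mathcal{P}^\partial\gamma(u))$, bound each constituent of $\tnorm{\cdot}_{s,\star}$ elementwise via the matching estimate in \cref{lem:interpolation}, and sum over $\mathcal{K}\in\mathcal{T}_h$. For the weighted $L^2$ term you flag, the paper simply writes ``collecting the leading order terms'' to arrive at $c(h_K^{2p_s}+\Delta t^{2p_t+2}h_K^{-2})$ without invoking an explicit $\Delta t\sim h_K$ hypothesis, so your treatment is if anything more candid about the bookkeeping than the paper's own.
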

\begin{proof}
  By \cref{lem:convergence}, we may bound the discretization error
  $\boldsymbol{u} - (u_h, \lambda_h)$ in the $\tnorm{\cdot}_s$ norm by the
  interpolation error
  $ \boldsymbol{u} - (\mathcal{P}u, \mathcal{P}^{\partial}\gamma(u))$ in the
  $\tnorm{\cdot}_{s,\star}$ norm:
  \begin{equation} 
    \label{eq:disc_bnd}
    \tnorm{ \boldsymbol{u} - (u_h, \lambda_h) }_{s} \le \del{ 1 + \frac{c_B}{c_i} }
    \tnorm{ \boldsymbol{u} - (\mathcal{P}u, \mathcal{P}^{\partial}\gamma(u))}_{s,\star}.
  \end{equation}
  Thus, it suffices to bound each term of
  $\tnorm{ \boldsymbol{u} - (\mathcal{P}u,
    \mathcal{P}^{\partial}\gamma(u))}_{s,\star}$
  using the interpolation estimates in~\cref{lem:interpolation}.

  First, combining the terms involving
  $\norm{u-\mathcal{P}u}_{\mathcal{K}}$, applying \cref{eq:interp_u},
  and collecting the leading order terms,
  \begin{equation}
    \del{1+ \frac{\Delta t + h_K}{\Delta t h_K^2} } \norm{u-\mathcal{P}u}_{\mathcal{K}}^2 \le 
    c \del{ h_K^{2p_s} + \Delta t^{2p_t+2}h_K^{-2}} \norm{ u }^2_{H^{(p_t + 1, p_s + 1)}(\mathcal{K})}.
  \end{equation}
  Using the fact that
  $\tfrac{\Delta t h_K^2}{\Delta t + h_K} \le \Delta th_K$ and applying the
  estimate \cref{eq:interp_dt}, we have
  \begin{align}
    \frac{\Delta t h_K^2}{\Delta t + h_K} \norm{\partial_t (u-\mathcal{P}u)}^2_{\mathcal{K}}
    \le c\del{h_K^{2p_s+1} \Delta t + h_K\Delta t^{2p_t+1}} \norm{u}^2_{H^{(p_t + 1, p_s + 1)}(\mathcal{K})} .
  \end{align}
  Next, an application of \cref{eq:interp_grad} yields
  \begin{align}
    \nu \norm{\overline{\nabla}_h(u-\mathcal{P}u)}^2_{\mathcal{K}} 
    \le  c\nu \del{ h_K^{2p_s} + \Delta t^{2p_t+2} } \norm{u}^2_{H^{(p_t + 1, p_s + 1)}(\mathcal{K})}.
  \end{align}
  Using the triangle inequality, \cref{eq:interp_trace}, and
  \cref{eq:interp_facet}, all of the advective facet terms may be
  bounded as follows:
  \begin{multline}
    \sum_{\mathcal{K}\in\mathcal{T}_h}\del{\norm{\beta_n^{1/2} (u-\mathcal{P}u)}_{\partial \mathcal{K}}^2
    + \; \norm{\beta_n^{1/2}(u-\mathcal{P}^{\partial} u)}_{\partial \mathcal{K}}^2}
    \le \\
    c \norm{\beta}_{L^{\infty}(\mathcal{E})}\sum_{\mathcal{K}\in\mathcal{T}_h}
      \del{h_K^{2p_s + 1} + \Delta t^{2p_t+1}} \norm{u}^2_{H^{(p_t + 1, p_s + 1)}(\mathcal{K})}.
  \end{multline}
  For the diffusive facet term, we again apply the triangle
  inequality, \cref{eq:interp_trace}, and \cref{eq:interp_facet} to
  obtain
  \begin{equation}
    \frac{\nu}{h_K} \norm{u - \mathcal{P} u}^2_{\partial \mathcal{K}}
    + \frac{\nu}{h_K} \norm{u - \mathcal{P}^{\partial} u}^2_{\partial \mathcal{K}} 
    \le c\nu \del{ h_K^{2p_s} + h_K^{-1}\Delta t^{2p_t+1} } \norm{u}^2_{H^{(p_t + 1, p_s + 1)}(\mathcal{K})} .
  \end{equation}
  Lastly, applying \cref{eq:interp_grad_trace},
  \begin{equation}
    h_K\nu \norm{\overline{\nabla}_h (u-\mathcal{P}u)\cdot \bar{n}}_{\mathcal{Q}}^2
    \le  c \nu \del{ h_K^{2p_s} + \Delta t^{2p_t+2} } \norm{u}^2_{H^{(p_t + 1, p_s + 1)}(\mathcal{K})}.
  \end{equation}
  Summing over all $\mathcal{K} \in \mathcal{T}_h$, collecting all of
  the above estimates, and returning to \cref{eq:disc_bnd} yields the
  assertion.
\end{proof}

%-----------------------------------------------------------------------------
\section{Numerical example}
\label{s:numex}

In this section we validate the results of the previous sections. For
this we consider the rotating Gaussian pulse test case on a
time-dependent domain as introduced in~\cite[Section
4.3]{Rhebergen:2013}. We
solve~\cref{eq:modprob}--\cref{eq:boundaryinitialconditions} with
$\bar{\beta} = (-4x_2, 4x_1)^T$ and $f = 0$. The boundary and initial
conditions are set such that the exact solution is given by
\begin{equation}
  \label{eq:example1}
  u(t, x_1, x_2) = \frac{\sigma^2}{\sigma^2 + 2 \nu t}\exp\left(-\frac{(\tilde{x}_1 - x_{1c})^2 + (\tilde{x}_2 - x_{2c})^2}
    {2\sigma^2 + 4 \nu t}\right),
\end{equation}
where $\tilde{x}_1 = x_1 \cos(4t) + x_2 \sin(4t)$,
$\tilde{x}_2 = -x_1 \sin(4t) + x_2 \cos(4t)$,
$(x_{1c}, x_{2c}) = (-0.2, 0.1)$. Furthermore, we set $\sigma =
0.1$.

The advection--diffusion problem is solved on a time-dependent
domain. The deformation is based on a transformation of a uniform
space--time mesh $(t, x_1^0, x_2^0) \in [0, t_N] \times [-0.5, 0.5]^2$
given by
\begin{equation}
  \label{eq:meshdeformation}
  x_i = x_i^0 + A \left(\tfrac{1}{2} - x_i^0\right)\sin \left(2\pi\left(\tfrac{1}{2} - x_i^* + t\right)\right) \qquad  i = 1,2,  
\end{equation}
where and $(x_1^*, x_2^*) = (x_2, x_1)$ and $A = 0.1$. We take
$t_N=1$.

This example was implemented using the Modular Finite Element Methods
(MFEM) library~\cite{mfem-library} on unstructured hexahedral
space--time meshes. The solution on the time-dependent domain is shown
at different points in time in~\cref{fig:mesh_movement}.

In~\cref{tab:ex1_cfl1} we compute the rates of convergence in the
$\tnorm{(\cdot, \cdot)}_s$ norm using polynomial degree
$p = p_t = p_s = 1, 2, 3$. We consider both $\nu = 10^{-2}$ and
$\nu = 10^{-6}$. Mesh refinement is done simultaneously in space and
time. For the case that $\nu = 10^{-2}$ we obtain rates of convergence
of approximately $p$, as expected from~\cref{lem:globErrorEstimate},
while for $\nu = 10^{-6}$ we obtain slightly better rates of
convergence, namely $p + 1/2$.

\begin{figure}[tbp]
  \begin{center}
    \includegraphics[width=.3\linewidth]{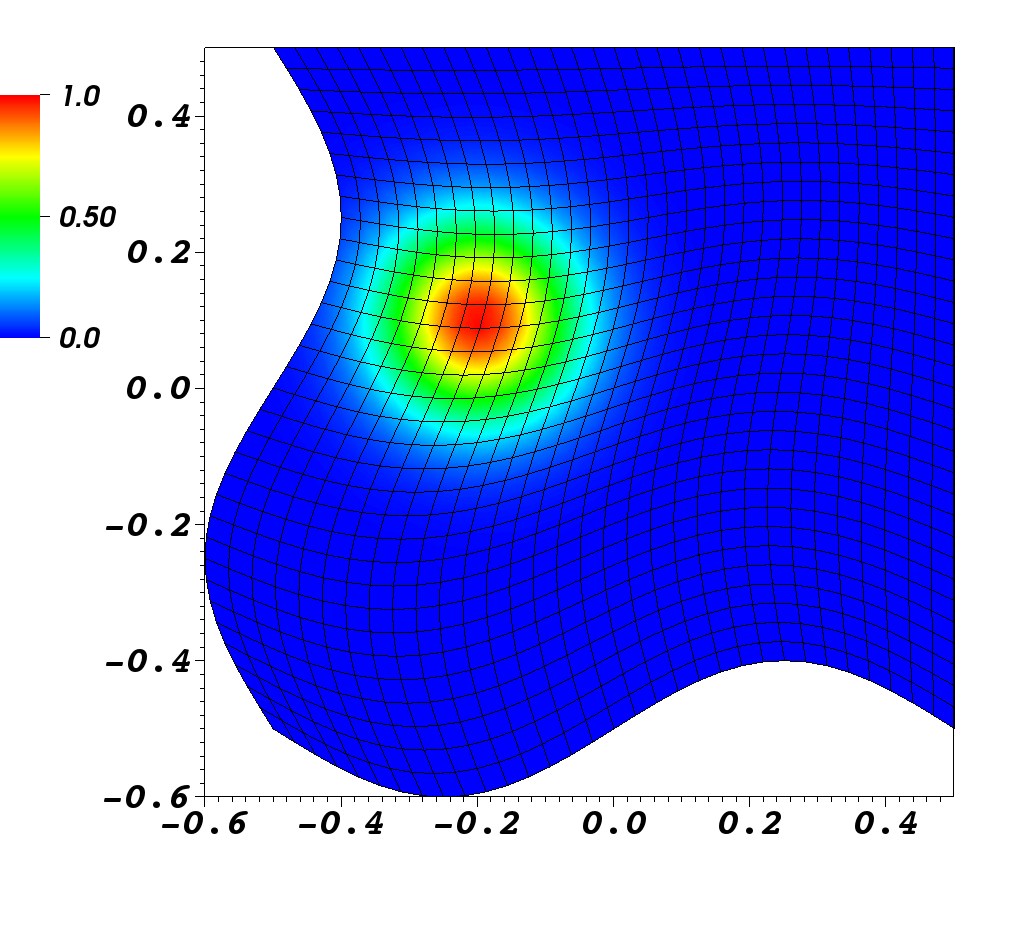}
    \includegraphics[width=.3\linewidth]{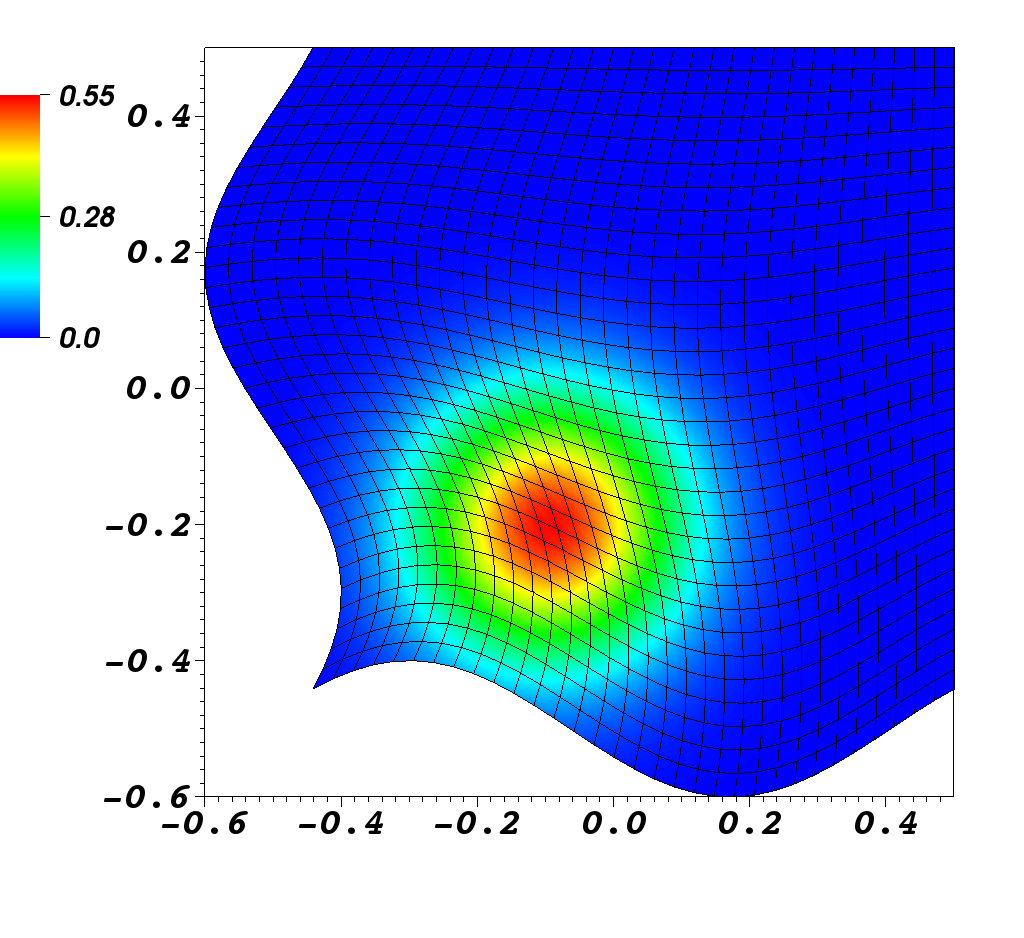}
    \includegraphics[width=.3\linewidth]{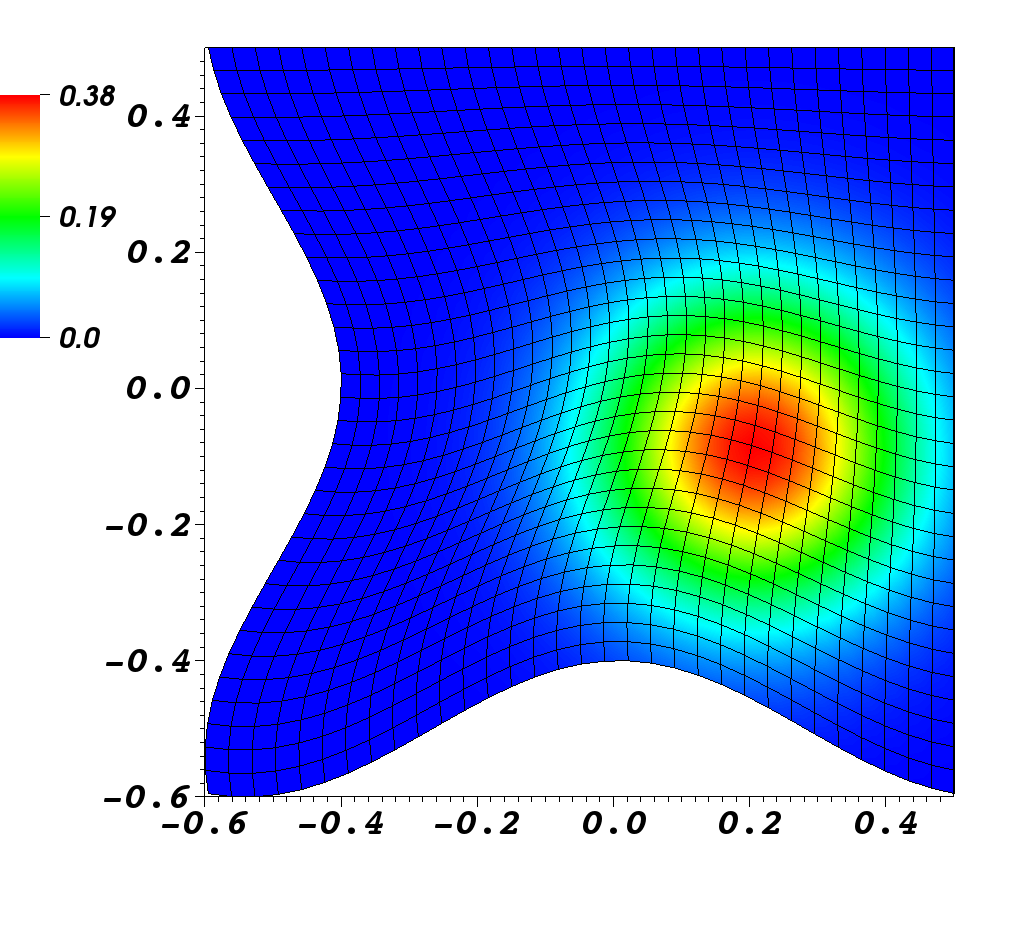}
    \caption{The mesh and solution for $\nu = 10^{-2}$ at time levels
      $t = 0, 0.4, 0.8$ (left to right).\label{fig:mesh_movement}}
  \end{center}
\end{figure}

\begin{center}
  \begin{table}[h]
    \centering
    \begin{tabular}{cc|cc|cc|cc}
      Cells per slab & Nr. of slabs   & $p = 1$  & rates  & $p = 2$  & rates & $p = 3$  & rates \\
      \hline
      $64$     & $8$       & 8.00e-2 &   -   & 1.52e-2 &   -  & 2.87e-3 &   -  \\
      $256$    & $16$      & 3.15e-2 & 1.3   & 3.24e-3 & 2.2  & 2.92e-4 & 3.3  \\
      $1024$   & $32$      & 1.30e-2 & 1.3   & 7.03e-4 & 2.2  & 3.21e-5 & 3.2  \\
      $4096$   & $64$      & 5.95e-3 & 1.1   & 1.64e-4 & 2.1  & 3.80e-6 & 3.1  \\
      \hline
      $64$     & $8$       & 1.75e-1 &   -   & 3.71e-2 &   -  & 6.67e-3 &   -  \\
      $256$    & $16$      & 7.78e-2 & 1.2   & 6.23e-3 & 2.6  & 5.60e-4 & 3.6  \\
      $1024$   & $32$      & 2.51e-2 & 1.6   & 1.03e-3 & 2.6  & 4.64e-5 & 3.6  \\
      $4096$   & $64$      & 7.60e-3 & 1.7   & 1.76e-4 & 2.5  & 3.88e-6 & 3.6
    \end{tabular}
    \caption{Rates of convergence when solving the advection--diffusion
      problem~\cref{eq:modprob}--~\cref{eq:boundaryinitialconditions} on a time-dependent
      domain with mesh deformation satisfying~\cref{eq:meshdeformation} with $\nu = 10^{-2}$
      (top) and $\nu = 10^{-6}$ (bottom). \label{tab:ex1_cfl1}}
  \end{table}
\end{center}

%------------------------------------------------------------------------------
\section{Conclusions}
\label{s:conclusions}

In this paper, we presented and analyzed a space--time hybridizable
discontinuous Galerkin method for the advection--diffusion equation on
moving domains. We have shown the consistency, boundedness, and
stability of the bilinear form, and the well-posedness of the method
via an inf-sup condition. Further, we demonstrated the convergence of
the method and derived error estimates in a mesh dependent
norm. Theory was validated by a numerical example.

%------------------------------------------------------------------------------
\bibliographystyle{siamplain}
\bibliography{references}
%------------------------------------------------------------------------------
\end{document}